\newcommand{\R}{{\mathbb R}}
\newcommand{\C}{{\mathbb C}}
\newcommand{\be}{\begin{eqnarray}}
\newcommand{\ben}{\begin{eqnarray*}}
\newcommand{\en}{\end{eqnarray}}
\newcommand{\enn}{\end{eqnarray*}}
\newcommand{\ba}{\backslash}
\newcommand{\pa}{\partial}
\newcommand{\ov}{\overline}
\newcommand{\eps}{\epsilon}
\newcommand{\Om}{\Omega}
\newcommand{\wi}{\widetilde}
\newcommand{\hx}{\hat{x}}
\newtheorem{theorem}{Theorem}[section]
\newtheorem{lemma}[theorem]{Lemma}
\newtheorem{definition}[theorem]{Definition}
\newtheorem{remark}[theorem]{Remark}
\begin{document}
\title{Generalized Rellich's lemmas, uniqueness theorem and inside-out duality for scattering poles}
\author{Xiaodong Liu\thanks{Academy of Mathematics and Systems Science,
Chinese Academy of Sciences, Beijing 100190, China. Email: xdliu@amt.ac.cn}
,\and
Jiguang Sun\thanks{Department of Mathematical Sciences, Michigan Technological University, Houghton, MI 49931, USA. Email: jiguangs@mtu.edu}
,\and
Lei Zhang\thanks{Corresponding Author, Department of Mathematics, Zhejiang University of Technology, Hangzhou 310014, China. Email: zhanglei@zjut.edu.cn}}
\date{}

\maketitle
\begin{abstract}
Scattering poles correspond to non-trivial scattered fields in the absence of incident waves and play a crucial role in the study of wave phenomena. These poles are complex wavenumbers with negative imaginary parts. In this paper, we prove two generalized Rellich's lemmas for scattered fields associated with complex wavenumbers. These lemmas are then used to establish uniqueness results for inverse scattering problems. We further explore the inside-out duality, which characterizes scattering poles through the linear sampling method applied to interior scattering problems. Notably, we demonstrate that exterior Dirichlet/Neumann poles can be identified without prior knowledge of the actual sound-soft or sound-hard obstacles. Numerical examples are provided to validate the theoretical results.
\end{abstract}

\section{Introduction}
Scattering resonances play a crucial role in various applications, with extensive research contributing a wealth of significant insights and discoveries (see, e.g., \cite{Dolph, DyatlovZworski2019, LaxPhillips1989, KellerRubinowGoldstein1963, Melrose, SjostrZworski1991, Taylor1986}). The wavenumber-dependent solution operator for the exterior scattering problem is defined on the upper complex plane and possesses a meromorphic continuation, exhibiting poles in the lower half of the complex plane. These poles, known as scattering poles or resonances, encapsulate crucial physical information about the scattering object. This motivates us to investigate the scattering problems when the wavenumber has a negative imaginary part.

In the first part of this paper, we define the far-field pattern of the scattered field for wavenumbers with negative imaginary parts and prove two generalized Rellich's lemmas. We then apply the results to demonstrate that the scattered field vanishes whenever the corresponding far-field pattern does. Additionally, we establish the uniqueness of a sound-soft obstacle under the condition that the far-field patterns are identical for non-real wavenumbers. These results provide valuable insights into scattering phenomena and related inverse problems \cite{Labreuche1998}.

In the context of scattering theory, the inside-out duality, which explores the relationship between interior eigenvalues and scattering operators, has been the subject of investigation over the past decade \cite{CCC2008, CCH2010, KL2013, LS2014,S2011}. Consequently, interior eigenvalues, including Dirichlet, Neumann and transmission eigenvalues, can be approximated from scattering data. This is typically achieved by examining the injectivity of the associated scattering operator.

The inside-out duality between scattering poles and the interior scattering problem was recently investigated in \cite{CCH2020}, where a conceptually unified approach for characterizing both scattering poles and interior eigenvalues for a given scattering problem was proposed. Motivated by \cite{CCH2020}, we conducted a detailed study of this inside-out duality for scattering poles of sound-soft/hard obstacles. Our findings demonstrate that the linear sampling method for interior scattering problems can effectively characterize scattering poles. Numerical examples indicate that the characterization of small poles is quite accurate for smooth obstacles, though less satisfactory for obstacles with corners.

While preparing the manuscript, we came across a relevant paper by Cakoni et al. \cite{CHZ2024}. Their focus is on developing an algorithm for computing scattering poles, building on the duality explored in \cite{CCH2020}. The numerical implementation differs from what we will present. Both papers contribute to the study of the inside-out duality for scattering poles, a topic that warrants further investigation.

The rest of the paper is organized as follows. In Section 2, we present the scattering problem, the outgoing condition, and the asymptotic behavior of the Hankel function. We then prove two generalized Rellich's lemmas, which are used to show that a vanishing far-field pattern implies a vanishing scattered field and uniqueness of the inverse problem. In Section 3, we provide inside-out duality for the scattering poles of sound-soft and sound-hard obstacles. Section 4 contains numerical examples that characterize the scattering poles using the linear sampling method for interior scattering problems. Finally, in Section 5, we draw conclusions and discuss potential future work.

\section{Scattering Problems and Generalized Rellich's Lemmas}
Consider the scattering of time harmonic acoustic waves by a sound-soft obstacle in $\R^2$. Let $D\subset\R^2$ be an open and bounded domain with
Lipschitz-boundary $\pa D$ such that the exterior $\R^2\ba\ov{D}$ is connected. Denote by $\R_{\leq 0}$ the set of non-positive real numbers. 
Given an incident field $u^i$ which is solution of the
Helmholtz equation $\Delta u^i +k^2 u^i = 0$ in $\R^2$ (except for possibly a subset of measure zero in the exterior of $D$), 
the obstacle $D$ gives rise to a scattered field $u^s \in H_{loc}^1(\R^2\ba\ov{D})$ satisfying the Helmholtz equation
\be
\label{Helmholtz}\Delta u^s+k^2u^s&=&0 \quad\text{in } \R^2\ba\ov{D},
\en
where $k\in\C\ba\R_{\leq 0}$ is the wavenumber.
For a sound-soft obstacle, the scattered field satisfies the Dirichlet boundary condition
\be
\label{SoundSoft} u^s = f\quad\text{on } \partial D
\en
with $f=-u^i|_{\pa D}\in H^{1/2}(\pa D)$. Similarly, the scattering from a sound-hard obstacle leads to the Neumann boundary condition
\be
\label{SoundHard}\frac{\partial u^s}{\partial \nu}= f \quad\text{on } \partial D
\en
with $f=-\frac{\pa u^i}{\pa\nu}\Big|_{\pa D}\in H^{-1/2}(\pa D)$. Here, $\nu$ is the unit outward normal to $\pa D$.

Let $H^{(1,2)}_{n}$ denote the Hankel functions of the first and second kind of order $n$.
The scattered field $u^s$ is {\em outgoing} if $u^s$ has the following Green's representation
\be\label{outgoing}
u^s(x)=\int_{\pa D}\left\{\frac{\pa \Phi_k(x,y)}{\pa\nu(y)}u^s(y)-\Phi_k(x,y)\frac{\pa u^s}{\pa\nu}(y)\right\}ds(y),\quad x\in \R^2\ba\ov{D},
\en
where
\ben
\Phi_k(x,y):=\frac{i}{4}H^{(1)}_{0}(k|x-y|)\,\quad x\in\R^2\ba\{y\}
\enn
is the fundamental solution of the Helmholtz equation in $\R^2$.
The  {\em outgoing} wave $u^s$ is analytic in $\R^2\ba\ov{D}$ by the analyticity of the fundamental solution for $x\neq y$.
If $\Im k\geq 0$, the {\em outgoing} condition \eqref{outgoing} is equivalent to the well known Sommerfeld radiation condition 
\ben
\label{Sommerfeld} \lim_{r\to\infty}\sqrt r\left(\frac{\partial u^s}{\partial r}-iku^s\right)=0,\quad r:=|x|.
\enn
This has been shown in Theorems 3.2-3.3 of \cite{CK-integral} for the three-dimensional case. The two-dimensional case can be proved similarly.
However, such an equivalence property does not hold for the case of $\Im k < 0$. Actually, the asymptotic behaviour given in \eqref{us-asy} shows that the {\em outgoing} solution $u^s$ with $\Im k <0$ exponential grows as $|x|\rightarrow \infty$. 

It is well known that, for $k\in {\mathbb C}_{+}:=\{k\in\C: \Im k\geq 0\}$, there exists a unique solution to the exterior scattering problem \eqref{Helmholtz}+\eqref{SoundSoft} + \eqref{outgoing} (or \eqref{Helmholtz}+ \eqref{SoundHard} + \eqref{outgoing}). 
Denote the scattering operator by $\mathcal {B}(k)$ such that
\be\label{SoluitonOperator}
u^s = \mathcal {B}(k)f.
\en
The operator function $\mathcal {B}(k)$ has a meromorphic continuation to the lower-half complex plane with discrete scattering poles in ${\mathbb C}_{-}:=\{k\in\C: \Im k <0\}$.
\begin{definition}
We call $k\in {\mathbb C}_{-}$ a Dirichlet (or Neumann) pole if the corresponding exterior homogeneous Dirichlet boundary value problem \eqref{Helmholtz}+\eqref{SoundSoft} + \eqref{outgoing} (or \eqref{Helmholtz}+ \eqref{SoundHard} + \eqref{outgoing}) with $f=0$ admits a nontrivial radiating solution $u^s$.
\end{definition}
The scattering poles correspond to non-zero scattered fields with zero incident fields.
There are rich results on the scattering poles.
We refer the readers to Chapter 9 of \cite{Taylor1986} and the recent monograph \cite{DyatlovZworski2019}. Numerical methods for computing scattering poles can be found in \cite{MaSun2023, XiGongSun2024} and the references therein.

With the help of the formula $H^{(1)\prime}_{0}(z)=-H^{(1)}_{1}(z)$ and the asymptotic behavior of the Hankel functions for large arguments \cite{AS}\be\label{Hankel-asy}
H^{(1,2)}_{n}(z)=\sqrt{\frac{2}{\pi z}}e^{\pm i(z-\frac{n\pi}{2}-\frac{\pi}{4})}\left\{1+O\Big(\frac{1}{|z|}\Big)\right\},\quad |z|\rightarrow\infty,\, \arg{z}<\pi,
\en
the {\em outgoing} solution $u^s$ has the following asymptotic behaviour 
\be\label{us-asy}
u^s(x)=\frac{e^{ik|x|}}{\sqrt{|x|}}\left\{u^{\infty}(\hx)+O\Big(\frac{1}{|x|}\Big)\right\}
\en
uniformly in all directions $\hx=x/|x|$, where the function $u^\infty$ defined on the unit circle $\mathbb{S}^1$ is known as the far-field pattern of $u^s$.

\subsection{Generalized Rellich's Lemmas}

Rellich's lemma states that, for $k \in {\mathbb R}_+:=\{k \in \mathbb R: k > 0\}$, if $u\in C^2(\R^2\ba\ov{D})$ is a solution to the Helmholtz equation satisfying 
\be\label{Rellich}
\lim_{r\rightarrow\infty} \int_{|x|=r}|u(x)|^2ds=0,
\en 
then $u=0$ in $\R^2\ba\ov{D}$. An important consequence of Rellich's lemma is that the {\em outgoing} wave $u^s$ and its far field pattern $u^{\infty}$ has a one-to-one correspondence. However, Rellich's lemma does not hold for $k$ with $\Im k\neq 0$. In fact, if $\Im k<0$, the {\em outgoing} solution grows exponentially and, as a consequence, \eqref{Rellich} does not hold.  We will show that the one-to-one correspondence between the {\em outgoing} wave $u^s$ and its far field pattern $u^{\infty}$  does hold for all $k\in\C\ba\R_{\leq 0}$. To do so, we need to generalize Rellich's lemma from $k \in \mathbb R_+$ to $k\in\C\ba\R_{\leq 0}$.

\begin{lemma}\label{Rellich-generalized} 
(Generalized Rellich’s Lemma I) Let $k\in\C\ba\R_{\leq 0}$ and let $u\in C^2(\R^2\ba\ov{D})$ be a solution to the Helmholtz equation satisfying 
\be\label{Rellich-assumption}
\lim_{r\rightarrow\infty} e^{2|\Im(kr)|}\int_{|x|=r}|u(x)|^2ds=0,
\en 
then $u=0$ in $\R^2\ba\ov{D}$.
\end{lemma}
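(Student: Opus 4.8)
The plan is to imitate the classical proof of Rellich's lemma, but tracking the exponential factor $e^{ik|x|}$ carefully so that the decay assumption (2.15) is the natural substitute for (2.14). The starting point is the expansion of $u$ in the exterior of a large disk. Since $u$ solves the Helmholtz equation in $\R^2\ba\ov D$ and $D$ is bounded, fix $R_0$ with $\ov D\subset B_{R_0}$; in the annular region $|x|>R_0$ the function $u$ admits, for each $r>R_0$, a Fourier expansion $u(r,\theta)=\sum_{n\in\Z}a_n(r)e^{in\theta}$, and separation of variables forces each coefficient to be a combination of Hankel functions, $a_n(r)=\alpha_n H^{(1)}_n(kr)+\beta_n H^{(2)}_n(kr)$, valid for $\arg(kr)<\pi$, which holds because $k\in\C\ba\R_{\leq 0}$. (One does not yet assume $u$ is outgoing, so both kinds of Hankel functions appear.)

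Next I would substitute this expansion into (2.15). By Parseval's identity on the circle,
\[
\int_{|x|=r}|u(x)|^2\,ds = 2\pi r\sum_{n\in\Z}|a_n(r)|^2,
\]
so the hypothesis says $e^{2|\Im(kr)|}\,r\,|a_n(r)|^2\to 0$ as $r\to\infty$ for every $n$. Now I invoke the large-argument asymptotics (2.11): since $\arg(kr)\to\arg k<\pi$, one has $|H^{(1)}_n(kr)|^2\sim \frac{2}{\pi|kr|}e^{-2\Im(kr)}$ and $|H^{(2)}_n(kr)|^2\sim\frac{2}{\pi|kr|}e^{2\Im(kr)}$. The key point is that when $\Im k\ge 0$ the dominant term in $|a_n(r)|$ is $|\beta_n|\,|H^{(2)}_n(kr)|$ (which grows like $e^{\Im(kr)}$), while when $\Im k<0$ the dominant term is $|\alpha_n|\,|H^{(1)}_n(kr)|$ (growing like $e^{-\Im(kr)}=e^{|\Im(kr)|}$); in either case, multiplying by $e^{2|\Im(kr)|}$ makes the dominant surviving contribution of order $e^{2|\Im(kr)|}\cdot\frac{1}{|kr|}e^{2|\Im(kr)|}$ — which blows up unless its coefficient vanishes. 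A clean way to make this rigorous without casework is to note that $e^{|\Im(kr)|}\ge e^{\pm\Im(kr)}$ for both signs, so $e^{2|\Im(kr)|}\bigl(|\alpha_n|^2|H^{(1)}_n|^2+|\beta_n|^2|H^{(2)}_n|^2\bigr)$ is bounded below, up to the cross term and the $O(1/|kr|)$ corrections, by $\frac{c}{|kr|}\bigl(|\alpha_n|^2+|\beta_n|^2\bigr)e^{2|\Im(kr)|}$; combined with the decay hypothesis this forces $\alpha_n=\beta_n=0$ for every $n$, hence $a_n\equiv 0$ and $u\equiv 0$ in $|x|>R_0$.

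Finally, analytic continuation finishes the argument: $u$ is real-analytic in the connected open set $\R^2\ba\ov D$ (it solves an elliptic equation with analytic coefficients), and it vanishes on the open set $|x|>R_0$, so $u\equiv 0$ in all of $\R^2\ba\ov D$.

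I expect the main obstacle to be handling the cross term $2\,\mathrm{Re}\,(\alpha_n\ov{\beta_n}H^{(1)}_n(kr)\ov{H^{(2)}_n(kr)})$ in $|a_n(r)|^2$: unlike the positive-real-$k$ case, where $H^{(1)}_n$ and $H^{(2)}_n$ are complex conjugates and the cross term is harmless, here $H^{(1)}_n(kr)\ov{H^{(2)}_n(kr)}$ has modulus $\sim\frac{2}{\pi|kr|}$ with an oscillatory-plus-exponential phase $e^{i(kr+\ov{kr})}=e^{2i\,\mathrm{Re}(kr)}$ times lower-order terms, so it does not dominate the diagonal terms. One must argue that the diagonal terms $|\alpha_n|^2 e^{-2\Im(kr)}$ and $|\beta_n|^2e^{2\Im(kr)}$ cannot be cancelled by this bounded-modulus oscillating cross term as $r\to\infty$ — e.g. by taking $r$ along a sequence on which $\Im(kr)\to+\infty$ or $\to-\infty$ to isolate $\beta_n$ or $\alpha_n$ respectively, or by a direct lower bound $|\alpha_n|^2|H^{(1)}_n|^2+|\beta_n|^2|H^{(2)}_n|^2-2|\alpha_n||\beta_n||H^{(1)}_n||H^{(2)}_n|=(|\alpha_n||H^{(1)}_n|-|\beta_n||H^{(2)}_n|)^2$ together with a separate estimate ruling out near-equality of the two exponentials. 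This is the only place where the non-self-conjugate nature of the Hankel pair for complex $k$ really bites; everything else is bookkeeping with (2.11).
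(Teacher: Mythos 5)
Your proof follows the same route as the paper's: expand $u$ in circular harmonics, use Parseval to reduce the hypothesis to each coefficient $a_n(r)=\alpha_nH^{(1)}_n(kr)+\beta_nH^{(2)}_n(kr)$, insert the large-argument Hankel asymptotics, force $\alpha_n=\beta_n=0$, and conclude by analyticity. The one caveat is that your ``unified'' lower bound $\tfrac{c}{|kr|}\bigl(|\alpha_n|^2+|\beta_n|^2\bigr)e^{2|\Im(kr)|}$ overstates the subdominant diagonal term (its weighted contribution is only $O(1/|kr|)$, not $O(e^{2|\Im(kr)|}/|kr|)$), but the fallback you yourself propose for the cross term --- splitting according to the sign of $\Im k$, killing the coefficient of the dominant exponential first and then the remaining one --- is precisely the three-case argument the paper uses and closes the point.
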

\begin{remark}
The case of $k\in \mathbb R_+$ is the classical Rellich's lemma.
\end{remark}
\begin{proof}
Let $(r,\phi)$ denote the polar coordinates. Recall the spherical harmonics 
\ben
Y_n(\hx):=\sqrt{\frac{1}{2\pi}}e^{in\phi}, n=0,\pm 1, \pm 2, \cdots,
\enn
which form a complete orthonormal system in $L^2(\mathbb{S}^1)$. For sufficiently large fixed $|x|=r$, we can expand $u$ as a uniformly convergent series of spherical harmonics
\be\label{anI}
u(x)=\sum_{n=-\infty}^{\infty}a_n(r)Y_n(\hx),
\en
where the coefficients are given by 
\ben
a_n(r)=\int_{\mathbb{S}^1}u(r\hx)\ov{Y_n}(\hx)ds(\hx).
\enn

Due to the orthonormality of the spherical harmonics, it holds that 
\ben
\int_{|x|=r}|u(x)|^2ds=r\sum_{n=-\infty}^{\infty}|a_n(r)|^2.
\enn
From the assumption \eqref{Rellich-assumption}, we find that
\be\label{an-limit}
\lim_{r\rightarrow\infty} re^{2|\Im(kr)|}|a_n(r)|^2 = 0
\en
for all $n=0,\pm 1,\pm 2,\cdots$. Since $u\in C^2(\R^2\ba\ov{D})$, we can differentiate \eqref{anI}, make use of $\Delta u+k^2u=0$ and the orthogonality of $Y_n$ to conclude that $a_n$ are solutions to the Bessel equation
\ben
\frac{d^2 a_n}{dr^2} + \frac{1}{r}\frac{d a_n}{dr} +\Big(k^2-\frac{n^2}{r^2}\Big)a_n =0,
\enn
that is,
\be\label{an}
a_n(r)= \alpha_n H^{(1)}_n(kr) + \beta_n H^{(2)}_n(kr),
\en
where $\alpha_n$ and $\beta_n$ are constants. 
Finally, inserting \eqref{an} into \eqref{an-limit} and using the asymptotic formula \eqref{Hankel-asy}, we obtain
\be\label{lim-final}
\lim_{r\rightarrow \infty} e^{2|\Im(kr)|}\Big[e^{-2\Im(kr)}|\alpha_n|^2+2\Re[\alpha_n\ov{\beta_n}e^{2i(\Re(kr)-\frac{n\pi}{2}-\frac{\pi}{4})}]+e^{2\Im(kr)}|\beta_n|^2\Big]=0.
\en
We divide $k$ into three  cases in terms of the sign of $\Im(k)$.
\begin{itemize}
  \item $\Im(k)=0$.
  
        This is the classical Rellich's lemma with $k>0$. The limit \eqref{lim-final} exists only if the product $\alpha_n\ov{\beta_n}=0$. Inserting this into the limit \eqref{lim-final} we deduce that $|\alpha_n|^2+|\beta_n|^2=0$, which implies that $\alpha_n=\beta_n=0$.
  \item $\Im(k)>0$.

        In this case, the third term of \eqref{lim-final} grows exponentially if $\beta_n\neq 0$. Thus the coefficient $\beta_n$ must vanish. Inserting $\beta_n=0$ into \eqref{lim-final} we further deduce that $\alpha_n=0$.
  \item $\Im(k)<0$.

        In this case,  the first term of \eqref{lim-final} grows exponentially if $\alpha_n\neq 0$. Similar to the second case, we have
        $\alpha_n=\beta_n=0$.
\end{itemize}
Thus, for all $k\in\C\ba\R_{\leq 0}$, $\alpha_n=\beta_n=0$,  $n=0,\pm 1,\pm 2,\cdots$. Therefore $u=0$ outside a sufficiently large disk and $u=0$ in $\R^2\ba\ov{D}$ by analyticity.
\end{proof}

If it is assumed further that the solution $u^s$ to the Helmholtz equation is {\em outgoing}, one has the following generalized Rellich's lemma, which can be used to establish the one-to-one  correspondence between the {\em outgoing} wave $u^s$ and its far field pattern $u^{\infty}$ for all $k\in\C\ba\R_{\leq 0}$.

\begin{lemma}\label{Rellich-generalized-outgoing}
(Generalized Rellich’s Lemma II) Let $k\in\C\ba\R_{\leq 0}$ and let $u\in C^2(\R^2\ba\ov{D})$ be an {\em outgoing} solution to the Helmholtz equation satisfying 
\be\label{Rellich-assumption-outgoing}
\lim_{r\rightarrow\infty} e^{2\Im(kr)}\int_{|x|=r}|u(x)|^2ds=0,
\en 
then $u=0$ in $\R^2\ba\ov{D}$.
\end{lemma}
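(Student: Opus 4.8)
The plan is to run the same spherical-harmonic expansion as in the proof of Lemma~\ref{Rellich-generalized}, but to bring in the extra structure coming from the \emph{outgoing} condition. For $|x|=r$ large I expand $u(x)=\sum_{n=-\infty}^{\infty}a_n(r)Y_n(\hx)$ with $a_n(r)=\int_{\mathbb{S}^1}u(r\hx)\ov{Y_n}(\hx)\,ds(\hx)$, and differentiating the series together with $\Delta u+k^2u=0$ forces $a_n(r)=\alpha_n H^{(1)}_n(kr)+\beta_n H^{(2)}_n(kr)$ for constants $\alpha_n,\beta_n$. By Parseval's identity $\int_{|x|=r}|u|^2\,ds=r\sum_n|a_n(r)|^2$, so the hypothesis \eqref{Rellich-assumption-outgoing} yields, for every fixed $n$, the term-by-term statement $\lim_{r\to\infty}r\,e^{2\Im(kr)}|a_n(r)|^2=0$, since each summand is nonnegative.

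The key new step is to show that the \emph{outgoing} condition \eqref{outgoing} forces $\beta_n=0$ for all $n$. Choose $R_0$ with $\ov D\subset\{|x|<R_0\}$. For $|x|>R_0$ and $y\in\pa D$, Graf's addition theorem expresses both $\Phi_k(x,y)$ and $\pa_{\nu(y)}\Phi_k(x,y)$ as series in $H^{(1)}_n(k|x|)Y_n(\hx)$, with coefficients depending on $y$ only through Bessel functions $J_n(k|y|)$. Substituting these expansions into the Green's representation \eqref{outgoing} and interchanging sum and integral shows $u(x)=\sum_n c_n H^{(1)}_n(k|x|)Y_n(\hx)$ for $|x|>R_0$. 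Matching this with $u(x)=\sum_n\big(\alpha_n H^{(1)}_n(kr)+\beta_n H^{(2)}_n(kr)\big)Y_n(\hx)$ and invoking the linear independence of $H^{(1)}_n$ and $H^{(2)}_n$ as solutions of Bessel's equation, I conclude $\beta_n=0$ (and $\alpha_n=c_n$) for every $n$.

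With $\beta_n=0$ the argument closes quickly and uniformly in $\arg k$ (recall $\arg(kr)=\arg k\in(-\pi,\pi)$, so \eqref{Hankel-asy} applies): since $|H^{(1)}_n(kr)|^2\sim\frac{2}{\pi|k|r}e^{-2\Im(kr)}$ as $r\to\infty$,
\ben
r\,e^{2\Im(kr)}|a_n(r)|^2=r\,e^{2\Im(kr)}|\alpha_n|^2\,|H^{(1)}_n(kr)|^2\;\longrightarrow\;\frac{2}{\pi|k|}|\alpha_n|^2,\qquad r\to\infty .
\enn
Comparing with $\lim_{r\to\infty}r\,e^{2\Im(kr)}|a_n(r)|^2=0$ forces $\alpha_n=0$ for all $n$. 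Hence $u$ vanishes outside $\{|x|<R_0\}$, and since $u$ is real-analytic in the connected set $\R^2\ba\ov D$, unique continuation gives $u\equiv 0$ in $\R^2\ba\ov D$.

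The main obstacle is the second paragraph: making precise that the \emph{outgoing} representation \eqref{outgoing} is equivalent, outside a large disk, to an expansion involving only first-kind Hankel functions. This is exactly where Lemma~\ref{Rellich-generalized-outgoing} differs from Lemma~\ref{Rellich-generalized}: there the heavier weight $e^{2|\Im(kr)|}$ annihilates both $\alpha_n$ and $\beta_n$ directly, whereas here, when $\Im k<0$, the weight $e^{2\Im(kr)}$ decays and the hypothesis by itself says nothing about the incoming coefficients $\beta_n$; it is only the outgoing structure that eliminates the $H^{(2)}_n$ part. Some care is also needed to justify the local uniform convergence of the spherical-harmonic series and the passage to the limit term by term, but these are standard for solutions of the Helmholtz equation.
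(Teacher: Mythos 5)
Your proof is correct and follows essentially the same route as the paper: expand in circular harmonics, use the hypothesis to kill each weighted coefficient, and use the outgoing condition to reduce $a_n(r)$ to $\alpha_n H^{(1)}_n(kr)$ so that the weight $e^{2\Im(kr)}$ exactly cancels the growth and forces $\alpha_n=0$. The only difference is that the paper simply asserts $a_n(r)=\alpha_n H^{(1)}_n(kr)$ as a consequence of the Green's representation, whereas you justify it via Graf's addition theorem — a worthwhile detail, correctly handled.
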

\begin{proof}
The proof is a slight modification of the proof for Lemma \ref{Rellich-generalized}.
Eqn. \eqref{Rellich-assumption-outgoing} implies that
\be\label{an-limit-outgoing}
\lim_{r\rightarrow\infty} re^{2\Im(kr)}|a_n(r)|^2 = 0 \quad \text{for } n=0,\pm 1,\pm 2,\cdots
\en 
Since $u\in C^2(\R^2\ba\ov{D})$ is an {\em outgoing} solution to the Helmholtz equation, \eqref{an} becomes
\be\label{alphanHn1}
a_n(r)= \alpha_n H^{(1)}_n(kr).
\en
Inserting \eqref{alphanHn1} into \eqref{an-limit-outgoing} and using the asymptotic formula \eqref{Hankel-asy}, it holds that
\ben\label{lim-final-outgoing}
\lim_{r\rightarrow \infty} |\alpha_n|^2=0,
\enn
which implies $\alpha_n=0$ since $\alpha_n$ is independent of $r$. Hence $a_n=0$ for all $n=0,\pm 1,\pm 2,\cdots$ and, consequently, $u=0$ outside a sufficiently large disk. By analyticity, $u=0$ in $\R^2\ba\ov{D}$.
\end{proof}

\begin{theorem}\label{1us-1uinf}
    Let $u\in C^2(\R^2\ba\ov{D})$ be an {\em outgoing} solution to the Helmholtz equation for which the far field pattern $u^{\infty}$ vanishes on an open part of $\mathbb{S}^1$. Then $u=0$ in $\R^2\ba\ov{D}$.
\end{theorem}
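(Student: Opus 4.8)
The goal is to show that the far-field pattern $u^\infty$ determines the outgoing field $u^s$ uniquely, strengthened to: vanishing of $u^\infty$ on merely an open arc of $\mathbb{S}^1$ forces $u \equiv 0$. The natural two-step strategy is first to upgrade ``$u^\infty = 0$ on an open arc'' to ``$u^\infty = 0$ on all of $\mathbb{S}^1$'', and then to feed $u^\infty \equiv 0$ into Generalized Rellich's Lemma II (Lemma \ref{Rellich-generalized-outgoing}).

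For the second step: if $u^\infty \equiv 0$, then the asymptotic expansion \eqref{us-asy} gives $u(x) = \frac{e^{ik|x|}}{\sqrt{|x|}}\, O(1/|x|)$ uniformly in direction, so $|u(x)|^2 \le C e^{-2\Im(k|x|)} |x|^{-3}$ for $|x| = r$ large. Hence
\[
e^{2\Im(kr)} \int_{|x| = r} |u|^2\, ds \le e^{2\Im(kr)} \cdot 2\pi r \cdot C e^{-2\Im(kr)} r^{-3} = \frac{2\pi C}{r^2} \to 0,
\]
which is exactly hypothesis \eqref{Rellich-assumption-outgoing}. Lemma \ref{Rellich-generalized-outgoing} then yields $u = 0$ in $\R^2 \ba \ov{D}$. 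This step is routine.

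The main work is the first step. I would expand $u$ in spherical harmonics as in \eqref{anI}; since $u$ is outgoing, the proof of Lemma \ref{Rellich-generalized-outgoing} shows $a_n(r) = \alpha_n H_n^{(1)}(kr)$, so $u(x) = \sum_n \alpha_n H_n^{(1)}(kr) Y_n(\hx)$ for $|x|$ large. Matching this against the asymptotics \eqref{Hankel-asy} of $H_n^{(1)}$ and comparing with \eqref{us-asy} identifies the far-field pattern as the (suitably normalized) convergent series $u^\infty(\hx) = \gamma \sum_n \alpha_n e^{-in\pi/2} Y_n(\hx)$ for an explicit nonzero constant $\gamma = \gamma(k)$; in particular $u^\infty$ is the restriction to $\mathbb{S}^1$ of a real-analytic function of the angular variable $\phi$ (indeed it extends analytically). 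Since $u^\infty$ vanishes on an open arc and is real-analytic in $\phi$, it vanishes identically on $\mathbb{S}^1$. I would need to justify interchanging the limit $r \to \infty$ in \eqref{us-asy} with the series — controlling the remainder terms $O(1/|z|)$ in \eqref{Hankel-asy} uniformly in $n$ is delicate, so instead it is cleaner to argue via the known fact that $u^\infty$, being expressible through the boundary data in \eqref{outgoing} by letting $|x| \to \infty$ in the Green's representation, is given by a boundary integral of $e^{-ik\,\hx\cdot y}$-type kernels against $u^s|_{\pa D}$ and $\pa_\nu u^s|_{\pa D}$, and such integrals are entire functions of $\hx$ extended to $\C^2$ — hence real-analytic on $\mathbb{S}^1$. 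Either route gives analyticity; the analytic continuation / unique continuation argument on the circle then closes the first step.

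The main obstacle is precisely establishing real-analyticity of $u^\infty$ in a clean, self-contained way: the series argument requires uniform-in-$n$ bounds on Hankel asymptotics that the excerpt has not provided in that form, while the boundary-integral argument requires differentiating under the integral sign and observing the kernel is analytic in $\hx$. Once analyticity is in hand, ``vanishing on an open arc $\Rightarrow$ vanishing on $\mathbb{S}^1$'' is immediate, and the reduction to Lemma \ref{Rellich-generalized-outgoing} finishes the proof.
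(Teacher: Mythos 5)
Your proposal is correct and follows essentially the same route as the paper: first use real-analyticity of the far field pattern in the observation direction to upgrade vanishing on an open arc to vanishing on all of $\mathbb{S}^1$, then insert $u^\infty\equiv 0$ into the expansion \eqref{us-asy} to verify hypothesis \eqref{Rellich-assumption-outgoing} and invoke Lemma \ref{Rellich-generalized-outgoing}. The paper simply asserts the analyticity without proof, whereas you supply the standard justification via the boundary-integral representation of $u^\infty$; otherwise the arguments coincide.
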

\begin{proof}
    By analyticity of the far field pattern with respect to the observation directions, we have $u^{\infty}=0$ on $\mathbb{S}^1$.
    From the expansion \eqref{us-asy} we have
    \ben
\lim_{r\rightarrow\infty} e^{2\Im(kr)}\int_{|x|=r}|u^s(x)|^2ds=0,
\enn 
then $u^s=0$ in $\R^2\ba\ov{D}$ by Lemma \ref{Rellich-generalized-outgoing}.
\end{proof}

We would like to remark that Lemmas \ref{Rellich-generalized} and \ref{Rellich-generalized-outgoing} and Theorem \ref{1us-1uinf} remain valid in three dimensions. 

\subsection{Uniqueness Theorems}
As an application of Theorem \ref{1us-1uinf}, we show that a single far field pattern for $k\in \C\ba\R$ is enough to reconstruct the sound-soft obstacle. Note that the uniqueness result for $k \in \mathbb R_+$ is still a long-standing open problem.

\begin{theorem}
  Let $D_1$ and $D_2$ be two sound-soft obstacles. Then $D_1=D_2$ if one of the following two assumptions holds.
\begin{itemize}
  \item[(A)] Let $k\in \C_{-}$ be a Dirichlet pole, $u^i=0$ and $u^\infty\neq0$. $D_1$ and $D_2$ have the same pole $k$ and such that $u^\infty$ is the far field pattern of two eigenfunctions associated to $k$.
  \item[(B)] For any fixed $k\in \C\ba\R$. The far field patterns coincide for a single incident field. Here, the incident field $u^i$ can be a plane wave $u^i=e^{ikx\cdot d}$ with fixed direction $d\in \mathbb{S}^1$ or a point source $u^i=\Phi_k(x,y)$ with fixed $y \in \mathbb R^2 \setminus \overline{D_{1}\cup D_{2}}$. 
\end{itemize}  
\end{theorem}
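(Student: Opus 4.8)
The plan is a Schiffer-type uniqueness argument in which Generalized Rellich's Lemma (Theorem \ref{1us-1uinf}) replaces the classical Rellich lemma, the decisive new ingredient being that for $k\in\C\ba\R$ the number $k^2$ lies outside $[0,\infty)$ and hence is never a Dirichlet eigenvalue of a bounded domain. Argue by contradiction and suppose $D_1\neq D_2$. Let $u_j^s$ be the scattered field for $D_j$ --- in case (B) the scattered field for the common incident wave $u^i$, at a value $k\in\C\ba\R$ at which both exterior Dirichlet problems are uniquely solvable (automatic when $\Im k>0$); in case (A) the prescribed radiating eigenfunction at the pole $k$ --- and write $u_j:=u^i+u_j^s$, so that $u_j=u_j^s$ in case (A), where $u^i\equiv0$. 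In both cases $u_1^\infty=u_2^\infty$ (by hypothesis in (B); because both eigenfunctions have far field $u^\infty$ in (A)), so $w:=u_1^s-u_2^s$ is an outgoing solution of the Helmholtz equation in $\R^2\ba\ov{D_1\cup D_2}$ with vanishing far field pattern. By Theorem \ref{1us-1uinf}, applied in the unbounded connected component $G$ of $\R^2\ba\ov{D_1\cup D_2}$, it follows that $w=0$, i.e. $u_1^s=u_2^s$ and therefore $u_1=u_2$, in $G$.

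The core of the argument is to exhibit a bounded open set on whose entire boundary one of the total fields vanishes. Since $\R^2\ba\ov{D_j}$ is connected, $\ov{D_1}=\ov{D_2}$ would give $D_1=D_2$; so, interchanging $D_1$ and $D_2$ if necessary, we may assume $\ov{D_1}\not\subset\ov{D_2}$, and set $\Omega:=\mathrm{int}\big(\R^2\ba(G\cup\ov{D_2})\big)$. Then $\Omega$ is nonempty (because $D_1\ba\ov{D_2}$ is a nonempty open subset of $\Omega$), bounded, and contained in $\R^2\ba\ov{D_2}$, so $u_2$ solves the Helmholtz equation in $\Omega$; moreover $\pa\Omega\subset\pa D_2\cup\pa G$ and $\pa G\subset\pa D_1\cup\pa D_2$. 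On the portion of $\pa\Omega$ lying on $\pa D_2$ the sound-soft condition for $D_2$ gives $u_2=0$; the remaining portion lies on $\pa D_1$, is disjoint from $\ov{D_2}$, and is approached by points of $G$, where $u_2=u_1$ by the first step while $u_1=0$ on $\pa D_1$, so $u_2=0$ there as well. Thus $u_2\in H^1(\Omega)$ and $u_2=0$ on $\pa\Omega$. (Forming $\Omega$ by removing $G\cup\ov{D_2}$ rather than just $\ov{D_1\cup D_2}$ is what "fills in" the bounded components of $\R^2\ba\ov{D_1\cup D_2}$ and makes this work.)

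Now the hypothesis $k\notin\R$ is used. Multiplying $\Delta u_2+k^2u_2=0$ by $\ov{u_2}$, integrating over $\Omega$, and using that the boundary term vanishes because $u_2=0$ on $\pa\Omega$, we get $\int_\Omega|\nabla u_2|^2=k^2\int_\Omega|u_2|^2$; since $k^2\notin[0,\infty)$ this forces $u_2\equiv0$ in $\Omega$ (equivalently, $k^2$ is not a Dirichlet eigenvalue of $\Omega$). By unique continuation for the Helmholtz equation in the connected set $\R^2\ba\ov{D_2}$, $u_2\equiv0$ there, i.e. $u_2^s=-u^i$ in $\R^2\ba\ov{D_2}$. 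In case (A) this means $u_2^s\equiv0$, hence $u^\infty=0$, contradicting $u^\infty\neq0$. In case (B), $u_2^s$ is an outgoing solution that is analytic throughout $\R^2\ba\ov{D_2}$, whereas $-u^i$ is either a plane wave --- which is not outgoing, since the only entire Helmholtz solution that is outgoing is the trivial one --- or a point source $-\Phi_k(\cdot,y)$ with $y\in\R^2\ba\ov{D_1\cup D_2}\subset\R^2\ba\ov{D_2}$, which is singular at $y$; both are impossible. Hence $D_1=D_2$.

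The step I expect to require the most care is the construction of $\Omega$ in the second paragraph: arranging that a \emph{single} total field vanishes on the whole boundary of a bounded set. This is exactly where having $k\notin\R$ is indispensable --- in the classical Schiffer argument one instead uses infinitely many incident directions to reach a contradiction, and for real $k$ the single-wave statement is a long-standing open problem --- and it involves the topological choice of $\Omega$ together with the trace and interior-regularity facts needed at the merely Lipschitz boundaries $\pa D_1$ and $\pa D_2$ (and on the possibly non-Lipschitz set $\Omega$): that $u_2^s$ extends analytically across $\pa D_1\ba\ov{D_2}$, that it is $H^1$ up to $\pa D_2$, and that $u_2\in H^1(\Omega)$ with vanishing trace legitimately enters the integration by parts. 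Once $\Omega$ is in hand, the remaining steps are elementary.
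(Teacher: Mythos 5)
Your proposal is correct and follows essentially the same Schiffer-type route as the paper: the identical use of Theorem~\ref{1us-1uinf} to equate the two scattered fields in the unbounded component $G$, the same bounded set $D^{\ast}=(\R^{2}\ba G)\ba\ov{D_2}$ (your $\Omega$) on whose whole boundary $u_2$ vanishes, the same Green's-identity step exploiting $k\notin\R$, and the same case analysis for (A) and (B). Your phrasing of the key point as ``$k^{2}\notin[0,\infty)$'' is in fact slightly more careful than the paper's ``take the imaginary part,'' which as literally stated degenerates for purely imaginary $k$ (there $k^{2}$ is a negative real and one concludes instead from the positivity of $\int_{\Omega}|\nabla u_2|^2-k^2\int_{\Omega}|u_2|^2$); otherwise the two arguments coincide, including the shared caveat about the regularity of $D^{\ast}$.
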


\begin{remark}
    A uniqueness result when $k$ is a Dirichlet pole was proposed by Labreuche in \cite{Labreuche1998}. However, the proof uses the classical Rellich's lemma for $k \in \mathbb R_+$, which does not hold if $k$ is a scattering pole. We remove this gap by using the generalized Rellich's lemma \ref{Rellich-generalized-outgoing} and extend the uniqueness result for all $k\in \C\ba\R$.  Our proof follows the arguments given by Labreuche in \cite{Labreuche1998}, which is originated from Schiffer \cite{ColtonKress2019,LaxPhillips1989}.
\end{remark}

\begin{proof}
 Assume that $D_{1}\neq D_{2}$. Since by Theorem \ref{1us-1uinf} the far field pattern uniquely determines the scattered field, the scattered fields $u^{s}_{1}$ and $u^{s}_{2}$ for $D_{1}$ and $D_{2}$ coincide in the unbounded component $G$ of the complement of $D_{1}\cup D_{2}$. Without loss of generality, we can assume that $D^{\ast}:= (\R^{2}\backslash G)\backslash\ov{D_{2}}$ is nonempty. Then $u^{s}_{2}$ is defined
in $D^{\ast}$, and the total field $u_2 = u^{i}+ u^{s}_{2}$ satisfies the Helmholtz equation in $D^{\ast}$ and the homogeneous boundary condition $u_2 = 0$ on $\pa D^{\ast}$. From Green's theorem we have that
\ben
\int_{D^{\ast}} (|\nabla u_2|^2-k^2|u_2|^2)dx=0.
\enn
Taking the imaginary part of the above equation and using the assumption $k\in \C\ba\R$, we obtain that $u_2=0$ in $D^{\ast}$. Furthermore, by the analytic continuation, $u_2=0$ in $\R^2\ba\ov{D_2\cup \{y\}}$.

Under the assumption (A), we have $u^{\infty}\neq 0$. This contradicts to the fact that $u^\infty$ is the far field pattern of $u_2=0$.  

Under the assumption (B), we have $u_2^s=-u^i$ in $\R^2\ba\ov{D_2\cup \{y\}}$. For the plane wave case, it contradicts the fact that the scattered field $u^s$ grows (or decays) exponentially as $|x|\rightarrow 0$ while the plane wave $|u^i(x,d)|=1$ if $x\cdot d=0$. For the point source case, it contradicts the fact that the scattered field $u^s$ is smooth in $\R^2\ba\ov{D_2}$ while the point source $u^i$ is singular at $y$.
\end{proof}

Unfortunately, the domain $D^{\ast}$ may have some cusps and thus Green's theorem may not hold if $u_2\neq 0$ on $\pa D^{\ast}$. Therefore, such an uniqueness result can not be proved as above for other boundary conditions.

\section{Inside-out Duality for Scattering Poles}
In this section, we derive the inside-out duality for scattering poles of sound-soft obstacles and sound-hard obstacles. The duality characterizes the scattering poles using the linear sampling method for the interior scattering problems. For sound-soft obstacles, we present a slightly different characterization of the scattering poles compared to that in \cite{CCH2020}, while using the same framework. The sound-hard case is analyzed in detail, complementing the results of \cite{CCH2020}. To this end, we recall the following interior boundary value problems for Helmholtz equations.

{\bf Interior Dirichlet boundary value problem:}
Given $f\in H^{1/2}(\pa D)$, find $u \in H^1(D)$ such that
\be
\label{IDbvp}\Delta u+k^2u = 0 \quad\text{in } D,\qquad u = f\quad\text{on } \partial D.
\en

{\bf Interior Neumann boundary value problem:}
Given $f\in H^{-1/2}(\pa D)$, find $u \in H^1(D)$ such that
\be
\label{INbvp}\Delta u+k^2u = 0 \quad\text{in } D,\qquad \frac{\pa u}{\pa\nu} = f\quad\text{on } \partial D.
\en

The following lemma on uniqueness is well-known.
\begin{lemma}\label{Uni-Interior}
$k\in {\mathbb C}\ba\R$ is NOT a Dirichlet/Neumann eigenvalue of $-\Delta$ in $D$, i.e., the interior boundary value problem \eqref{IDbvp}/\eqref{INbvp} with $f=0$ admits only trivial solution $u=0$ in $D$.
\end{lemma}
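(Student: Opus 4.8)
The plan is to argue directly from Green's first identity, just as in the uniqueness proof above. Suppose $u\in H^1(D)$ solves $\Delta u+k^2u=0$ in $D$ together with either $u=0$ on $\pa D$ (the Dirichlet case \eqref{IDbvp} with $f=0$) or $\pa u/\pa\nu=0$ on $\pa D$ (the Neumann case \eqref{INbvp} with $f=0$). Since $\pa D$ is Lipschitz, Green's first identity applied to $u$ and $\ov u$ (equivalently, testing the weak formulation of the interior problem against $v=u$) yields
\[
\int_D\big(|\nabla u|^2-k^2|u|^2\big)\,dx=\int_{\pa D}\ov u\,\frac{\pa u}{\pa\nu}\,ds=0,
\]
because in each case one of the two factors of the boundary integrand vanishes on $\pa D$.

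Next I would use that $\int_D|\nabla u|^2\,dx$ is real and nonnegative, so the identity forces $k^2\int_D|u|^2\,dx$ to be a nonnegative real number. If $\int_D|u|^2\,dx\neq 0$, this requires $k^2\in\R$ with $k^2\geq 0$; writing $k=a+ib$ gives $k^2=a^2-b^2+2iab$, so $2ab=0$ and $a^2-b^2\geq 0$, which forces $b=0$, i.e.\ $k\in\R$ --- contradicting $k\in\C\ba\R$. Hence $\int_D|u|^2\,dx=0$ and $u=0$ in $D$. The Dirichlet and Neumann cases are treated identically, the only difference being which factor kills the boundary term.

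There is essentially no obstacle here. The two points worth stating carefully are: (i) the validity of Green's first identity for $H^1$ solutions on a Lipschitz domain, which is exactly the weak formulation used to pose \eqref{IDbvp}--\eqref{INbvp}; and (ii) that the constraint ``$k^2$ real and $\geq 0$'' already disposes of the purely imaginary case $k=ib$ with $b\neq 0$ --- there $-k^2=b^2>0$ and the identity reads $\int_D|\nabla u|^2\,dx+b^2\int_D|u|^2\,dx=0$, a sum of two nonnegative terms, so $u\equiv 0$ follows at once. One could also phrase the whole lemma spectrally: the Dirichlet/Neumann Laplacian on $D$ is self-adjoint and nonnegative, hence its spectrum lies in $[0,\infty)$, so no $k^2$ with $k\notin\R$ can be an eigenvalue.
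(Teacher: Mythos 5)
Your proof is correct and complete. The paper states this lemma without proof (calling it ``well-known''), but your Green's-identity argument is the standard one and is exactly the technique the paper itself uses in its uniqueness theorem, where the imaginary part of $\int_{D}(|\nabla u|^2-k^2|u|^2)\,dx=0$ is taken; you correctly handle the one case that the imaginary part alone does not settle, namely purely imaginary $k=ib$, for which the identity becomes a sum of nonnegative terms.
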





For $k\in {\mathbb C}_{-}$, the existence of the solution to the interior boundary value problems \eqref{IDbvp}/\eqref{INbvp} can be easily established by the standard variational method or the integral equation method. Denote by $\Om$ a bounded domain with Lipschitz boundary $\pa\Om$ such that $\ov{\Om}\subset D$.
We define an operator $G$ by
\be\label{G-Def}
Gf=u|_{\pa\Om},
\en
where $u$ is the solution of the boundary value problem \eqref{IDbvp}/\eqref{INbvp}. The interior regularity property implies that the operator $G$ is compact.

Considering the scattering problem due to a point source $\Phi_k(\cdot, y)$ located at $y\in\pa \Om$, let $u^s_k(\cdot, y)$ be the unique solution of the interior boundary value problems \eqref{IDbvp} (or \eqref{INbvp}) with the Dirichlet boundary value $-\Phi_k(\cdot, y)$ (or Neumann boundary value $-\pa \Phi_k(\cdot, y)/\pa\nu$) on $\pa D$.
The following reciprocity relation holds 
\be\label{Reciprocity Relation}
u^s_k(x, y)= u^s_k(y,x),\quad x,y\in D.
\en
Define the near field operator $\mathcal{N}_k: L^2(\pa \Om) \to L^2(\pa \Om)$ such that
\begin{equation}\label{NFoperator}
\Big(\mathcal{N}_k g\Big)(x) = \int_{\pa \Om}u^{s}_k(x,y)g(y)ds(y),\quad x\in \pa\Om.
\end{equation}
Introduce the single layer potential operator $\mathcal {S}_k: L^2(\pa \Om) \to H^1_{loc}(\R^2\ba{\ov{D}})$ such that
\ben
\Big(\mathcal {S}_k g\Big)(x):=\int_{\pa\Om} \Phi_k(x,y)g(y)ds(y),\quad x\in \R^2\ba{\ov{D}}.
\enn

\begin{theorem} \label{S-Properties}
If $k\in {\mathbb C}_{-}$, then the operator $\mathcal {S}_k: L^2(\pa \Om) \to H^1_{loc}(\R^2\ba{\ov{D}})$ is injective. If $k\in {\mathbb C}_{-}$ is not a scattering pole, then $\mathcal {S}_k: L^2(\pa \Om) \to H^1_{loc}(\R^2\ba{\ov{D}})$ has a dense range.
\end{theorem}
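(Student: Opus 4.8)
The plan is to establish the two assertions separately, since they rely on different structural facts about $\mathcal{S}_k$.

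\medskip

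\noindent\textbf{Injectivity.} First I would suppose $\mathcal{S}_k g = 0$ in $\R^2\ba\ov{D}$ for some $g\in L^2(\pa\Om)$. The function $v:=\mathcal{S}_k g$ is defined throughout $\R^2\ba\{\pa\Om\}$, solves the Helmholtz equation off $\pa\Om$, and is outgoing (it is a single-layer potential with kernel $\Phi_k$, so it has the Green's representation of an outgoing wave in the exterior of any disk containing $\pa\Om$). Vanishing in the unbounded component $\R^2\ba\ov{D}$ forces, by analytic continuation, $v=0$ in the whole connected region $\R^2\ba\ov\Om$; in particular $v=0$ on $\pa D$ and, since $\ov\Om\subset D$, also in the annular region $D\ba\ov\Om$. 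Then $v|_{\pa\Om}$ from inside $\Om$ equals $v|_{\pa\Om}$ from outside $=0$, so $v$ solves the interior Dirichlet problem in $\Om$ with zero boundary data; by Lemma \ref{Uni-Interior} (applicable since $k\in\C_-\subset\C\ba\R$), $v=0$ in $\Om$ as well. Hence $v\equiv 0$ in all of $\R^2$, and the standard jump relations for the single-layer potential give $g = [\pa v/\pa\nu]_{\pa\Om} = 0$. Thus $\mathcal{S}_k$ is injective.

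\medskip

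\noindent\textbf{Dense range.} For the density statement I would argue by duality: the closure of the range of $\mathcal{S}_k$ is all of $H^1_{loc}(\R^2\ba\ov D)$ iff the only element of the (appropriate) dual space annihilating every $\mathcal{S}_k g$ is zero. Concretely, suppose $\varphi$ is a compactly supported functional (representable, after the usual identifications, by a compactly supported element, e.g.\ in $H^{-1}_{comp}$) with $\langle \varphi, \mathcal{S}_k g\rangle = 0$ for all $g\in L^2(\pa\Om)$. Interchanging the order of integration, this says $\int_{\pa\Om}\big(\int \Phi_k(x,y)\,d\varphi(x)\big) g(y)\,ds(y)=0$ for all $g$, hence the potential $w(y):=\langle\varphi,\Phi_k(\cdot,y)\rangle$ vanishes on $\pa\Om$. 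Now $w$ solves the Helmholtz equation in $\R^2\ba\operatorname{supp}\varphi$ and is outgoing; since $\operatorname{supp}\varphi\subset\R^2\ba\ov D$, the function $w$ is in particular a Helmholtz solution in a neighbourhood of $\ov D$, and $w|_{\pa\Om}=0$. By Lemma \ref{Uni-Interior}, $w=0$ in $\Om$, so by analytic continuation $w=0$ in the connected set $D\ba\operatorname{supp}\varphi$ reaching up to $\pa D$; extending, $w=0$ in the unbounded component $\R^2\ba(\operatorname{supp}\varphi\cup\ov D)$ by the unique continuation/Rellich argument and the outgoing condition. Here is where the scattering-pole hypothesis enters: one uses that $k\in\C_-$ is \emph{not} a scattering pole to conclude that the appropriate exterior boundary value problem for $w$ (with the data it inherits on $\pa D$) has a unique solution, forcing $w\equiv 0$ globally, and then the jump relations across $\operatorname{supp}\varphi$ yield $\varphi=0$.

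\medskip

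\noindent\textbf{Main obstacle.} The delicate point is the dense-range half: making the duality pairing precise (identifying the correct dual space of $H^1_{loc}(\R^2\ba\ov D)$ and the class of test functionals $\varphi$, so that $w=\langle\varphi,\Phi_k(\cdot,\cdot)\rangle$ is a well-defined outgoing Helmholtz solution), and, crucially, invoking the "not a scattering pole" assumption correctly. Since $k\in\C_-$, outgoing solutions grow exponentially, so one cannot appeal to the usual $L^2$-based exterior uniqueness; instead the argument must pass through the meromorphic solution operator $\mathcal{B}(k)$ of \eqref{SoluitonOperator} and Theorem \ref{1us-1uinf} (vanishing far field $\Rightarrow$ vanishing field via Lemma \ref{Rellich-generalized-outgoing}) to get that $w$, being outgoing with the boundary data it carries, must vanish when $k$ is a regular point of $\mathcal{B}$. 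Threading this together carefully — rather than the interior-problem and jump-relation bookkeeping, which is routine — is the real work.
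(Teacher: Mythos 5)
Your injectivity argument is correct and is essentially the paper's own: analytic continuation of the vanishing potential from $\R^2\ba\ov{D}$ to $\R^2\ba\ov{\Om}$, continuity of the single-layer potential across $\pa\Om$, Lemma \ref{Uni-Interior} inside $\Om$, and the jump of the normal derivative to get $g=0$.

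The dense-range half has a genuine gap. The range of $\mathcal{S}_k$ consists of Helmholtz solutions, and Helmholtz solutions are certainly not dense in all of $H^1_{loc}(\R^2\ba\ov{D})$; the density that is meant (and that is actually used later, in Theorem \ref{LSM-D}) is density in the set of \emph{outgoing} solutions, i.e.\ in the range of the solution operator $\mathcal{B}(k)$. Your duality setup, testing against arbitrary compactly supported functionals $\varphi$ on $H^1_{loc}(\R^2\ba\ov{D})$, therefore cannot close: the final step ``jump relations across $\operatorname{supp}\varphi$ yield $\varphi=0$'' fails for a general distribution. Concretely, $\langle\varphi,u\rangle:=\int\psi\,(\Delta u+k^2u)\,dx$ with $\psi\in C_c^\infty(\R^2\ba\ov{D})$ annihilates the whole range of $\mathcal{S}_k$, and its potential $w(y)=\langle\varphi,\Phi_k(\cdot,y)\rangle=-\psi(y)$ vanishes on $\pa\Om$ and outside $\operatorname{supp}\psi$, yet $\varphi\neq0$. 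Relatedly, the hypothesis that $k$ is not a scattering pole never genuinely bites in your chain of deductions: once $w=0$ in $\Om$ you continue analytically through $D$ and past $\pa D$ without ever needing exterior well-posedness, so the same argument would ``prove'' dense range when $k$ \emph{is} a pole, contradicting Theorem \ref{pole-D}.

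The paper avoids both problems by factoring $\mathcal{S}_k=\mathcal{B}(k)A_k$, where $A_k:L^2(\pa\Om)\to H^{1/2}(\pa D)$ is given by $(A_kg)(x)=\int_{\pa\Om}\Phi_k(x,y)g(y)\,ds(y)$ for $x\in\pa D$, and proving that $A_k^{\ast}$ is injective (with a parallel operator $B_k$ for the Neumann case). There the annihilating object is an honest density $h$ on the curve $\pa D$; the associated single-layer potential $v_h$ over $\pa D$ vanishes on $\pa\Om$, hence in $\Om$ by Lemma \ref{Uni-Interior}, hence in $D$ by analytic continuation, hence has zero trace on $\pa D$. It is exactly at this point that ``$k$ is not a Dirichlet pole'' is invoked: the outgoing solution $v_h$ in $\R^2\ba\ov{D}$ with zero Dirichlet data must vanish, and only then does the jump relation across $\pa D$ give $h=0$. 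To repair your proof you would need to restrict the duality to the closed subspace of outgoing solutions --- equivalently, dualize on $\pa D$ as the paper does --- so that both the pole hypothesis and a bona fide jump relation become available.
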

\begin{proof}
Given $g\in L^2(\pa \Om)$, define
     \ben
     u_g (x) := \int_{\pa\Om} \Phi_k(x,y)g(y)ds(y),\quad x\in \R^2\ba{\pa \Om}.
     \enn
     Then $u_g$ is a solution to the Helmholtz equation in $\Om$ and in $\R^2\ba\ov{\Om}$ and satisfies the outgoing condition \eqref{outgoing}.
     Let $u_g=\mathcal {S}_k g=0$ in $\R^2\ba\ov{D}$, by the unique continuation principle, we have $u_g=0$ in $\R^2\ba\ov{\Om}$. Furthermore, by the continuation of the single layer potential, we have $u_g=0$ on $\pa \Om$. By Lemma \ref{Uni-Interior}, we have $u_g=0$ in $\Om$. Now the jump relation of the single layer potential implies $g=0$. Hence, the operator $\mathcal {S}_k: L^2(\pa \Om) \to H^1_{loc}(\R^2\ba{\ov{D}})$ is injective.

Next we assume that $k\in \C_{-}$ is not a Dirichlet pole. We introduce an auxiliary operator $A_k: L^2(\pa \Om) \to H^{1/2}(\pa D)$ defined by
       \ben
        \Big(A_k g\Big)(x):=\int_{\pa\Om} \Phi_k(x,y)g(y)ds(y),\quad x\in \pa D.
       \enn
     One has the factorization $\mathcal {S}_k = \mathcal {B}(k)A_k$,  where $\mathcal {B}(k)$ is the scattering operator defined in \eqref{SoluitonOperator}. Note that $\mathcal {B}(k)$ is well-defined since $k\in {\mathbb C}_{-}$ is not a scattering pole. The denseness of the range of $\mathcal {S}_k$ then follows from the fact that the auxiliary operator $A_k$ has a dense range in $H^{1/2}(\pa D)$. Interchanging the order of integration we observe that the adjoint operator $A^\ast_k: H^{-1/2}(\pa D) \to L^2(\pa \Om)$ of $A_k$ is
     defined by
     \ben
     \Big(A^\ast_k h\Big)(y):=\int_{\pa D} \ov{\Phi_k(x,y)}h(x)ds(x),\quad y\in \pa\Om.
     \enn
     Define the single layer potential
     \ben
     v_h(y):=\int_{\pa D}\Phi_k(x,y)\ov{h(x)}ds(x), \quad y\in \R^2\ba\pa D.
     \enn
     Thus $v_h$ is a solution to the Helmholtz equation in both $D$ and $\R^2\ba\ov{D}$ and satisfies the outgoing condition \eqref{outgoing}.
     Letting $A^\ast_k h=0$ on $\pa\Om$, we have $v_h=0$ on $\pa\Om$. Note that $v_h$ is a solution to the Helmholtz equation in $D$,  Lemma \ref{Uni-Interior} yields $v_h=0$ in $\Om$. Then $v_h=0$ in $D$ by analytic continuation. In particular, $v_h=0$ on $\pa D$ in the trace sense. Noting that $v_h$ is also an outgoing solution to the Helmholtz equation in $\R^2\ba\ov{D}$, we have, by the assumption $k\in {\mathbb C}_{-}$ is not a Dirichlet pole, that $v_h=0$ in $\R^2\ba\ov{D}$. Thus, $h=0$ by the jump relation of the single layer potential. That is, the adjoint operator $A^\ast_k$ is injective and therefore the auxiliary operator $A_k$ has a dense range in $H^{1/2}(\pa D)$.
     
     Finally, we consider the case that $k\in \C_{-}$ is not a Neumann pole. To do so, we introduce another auxiliary operator $B_k: L^2(\pa \Om) \to H^{1/2}(\pa D)$ defined by
       \ben
        \Big(B_k g\Big)(x):=\frac{\pa}{\pa\nu(x)}\int_{\pa\Om} \Phi_k(x,y)g(y)ds(y),\quad x\in \pa D.
       \enn
     Then, we have the factorization $\mathcal {S}_k = \mathcal {B}(k)B_k$. Using the fact that the exterior Neumann problem is well posed if $k$ is not a Neumann pole, the denseness of the range of $\mathcal {S}_k$ follows from the fact that the auxiliary operator $B_k$ has a dense range in $H^{-1/2}(\pa D)$. Interchanging the order of integration, the adjoint operator $B^\ast_k: H^{1/2}(\pa D) \to L^2(\pa \Om)$ of $B_k$ is
     defined by
     \ben
     \Big(B^\ast_k h\Big)(y):=\int_{\pa D} \ov{\frac{\pa\Phi_k(x,y)}{\pa\nu(x)}}h(x)ds(x),\quad y\in \pa\Om.
     \enn
     Define the double layer potential
     \ben
     w_h(y):=\int_{\pa D}\frac{\pa\Phi_k(x,y)}{\pa\nu(x)}\ov{h(x)}ds(x), \quad y\in \R^2\ba\pa D.
     \enn
     Then $w_h$ is a solution to the Helmholtz equation in both $D$ and $\R^2\ba\ov{D}$ and satisfies the outgoing condition \eqref{outgoing}.
     Letting $B^\ast_k h=0$ on $\pa\Om$, we have $w_h=0$ on $\pa\Om$. Note that $w_h$ is a solution to the Helmholtz equation in $D$, Lemma \ref{Uni-Interior} yields $w_h=0$ in $\Om$. Thus $w_h=0$ in $D$ by analytic continuation. In particular, $\frac{\pa v_h}{\pa\nu}=0$ on $\pa D$ in the trace sense. Noting that $w_h$ is also an outgoing solution to the Helmholtz equation in $\R^2\ba\ov{D}$ and $k\in {\mathbb C}_{-}$ is not a Neumann pole, we have that $w_h=0$ in $\R^2\ba\ov{D}$. By the jump relation of the double layer potential $h=0$. That is, the adjoint operator $B^\ast_k$ is injective and therefore $B_k$ has dense range in $H^{-1/2}(\pa D)$.
\end{proof}

We first study the inside-out duality for the case of Dirichlet boundary condition in Section \ref{subsec-D}. The case of Neumann boundary condition is considered in Section \ref{subsec-N}, which is analogous to the Dirichlet case.

\subsection{Dirichlet Case}\label{subsec-D}
Letting $w\in H^1_{loc}(\R^2\ba{\ov{D}})$ be an outgoing solution to the Helmholtz equation in $\R^2\ba{\ov{D}}$, we define the operator $\mathcal {G}_k: H^1_{loc}(\R^2\ba{\ov{D}}) \to L^2(\pa \Om)$ by $\mathcal {G}_k w=u_w|_{\pa \Om}$, where $u_w\in H^1(D)$ solves
\ben
\Delta u_w+k^2 u_w= 0 \quad\text{in } D,\qquad u_w = w|_{\pa D} \quad\text{on } \pa D.
\enn
Assume that $k\in {\mathbb C}_{-}$. Then $\mathcal{N}_k: L^2(\pa \Om) \to L^2(\pa \Om)$ has a factorization
     \be\label{Factorization}
     \mathcal{N}_k=\mathcal{G}_k\mathcal{S}_k.
     \en
Assuming further that $k\in {\mathbb C}_{-}$ is not a Dirichlet pole, we have $\Phi_k(\cdot, z)\in \mathscr{R}(\mathcal {G}_k)$ if $z\in \R^2\ba\ov{D}$ (see Lemma 2.10 of \cite{CCH2020}). 

To characterize the Dirichlet poles from the scattered fields for the interior cavity scattering problems, we need the following two theorems.

\begin{theorem}\label{LSM-D}
Assume that $k\in {\mathbb C}_{-}$ is not a Dirichlet pole. Let $z\in \R^2\ba\ov{D}$. Then for every $\eps$, $0<\eps<1$, there exists $g_\eps^z\in L^2(\pa\Om)$ such that
\be\label{Ng=Phi-D}
\lim_{\eps\rightarrow 0}\|\mathcal{N}_k g^{\eps}_z - \Phi_k(\cdot, z)\|_{L^2(\pa\Om)}=0
\en
and
\be\label{Sg<M}
\|\mathcal{S}_k g^{\eps}_z\|_{H^1_{loc}(\R^2\ba{\ov{D}})} < M
\en
for some constant $M>0$. 
\end{theorem}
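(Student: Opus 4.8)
The plan is to run the standard linear sampling method (LSM) argument using the factorization $\mathcal{N}_k=\mathcal{G}_k\mathcal{S}_k$ from \eqref{Factorization}, but keeping careful track of the fact that we are in $\mathbb{C}_{-}$ so that there is no radiation/energy argument available; everything must instead be driven by injectivity and denseness of the operators involved. First I would invoke the hypothesis $k\in\mathbb{C}_{-}$ is not a Dirichlet pole together with the remark just before the statement that $\Phi_k(\cdot,z)\in\mathscr{R}(\mathcal{G}_k)$ for $z\in\R^2\ba\ov{D}$: write $\Phi_k(\cdot,z)|_{\pa\Om}=\mathcal{G}_k w_z$ for some outgoing solution $w_z\in H^1_{loc}(\R^2\ba\ov{D})$ of the Helmholtz equation. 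Then I would use Theorem \ref{S-Properties}: since $k$ is not a Dirichlet pole (hence not a scattering pole in the relevant sense, via the factorization $\mathcal{S}_k=\mathcal{B}(k)A_k$ used there), $\mathcal{S}_k:L^2(\pa\Om)\to H^1_{loc}(\R^2\ba\ov{D})$ is injective with dense range. Therefore there is a sequence $g^\eps_z\in L^2(\pa\Om)$ with $\|\mathcal{S}_k g^\eps_z-w_z\|_{H^1_{loc}(\R^2\ba\ov{D})}\to 0$ as $\eps\to 0$.

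Next I would verify that this choice of $g^\eps_z$ does the job. For \eqref{Ng=Phi-D}: apply the bounded operator $\mathcal{G}_k$ (which is bounded on $H^1_{loc}(\R^2\ba\ov{D})$, by interior regularity as noted in the discussion of $G$ and $\mathcal{G}_k$) to get
\[
\|\mathcal{N}_k g^\eps_z-\Phi_k(\cdot,z)\|_{L^2(\pa\Om)}=\|\mathcal{G}_k\mathcal{S}_k g^\eps_z-\mathcal{G}_k w_z\|_{L^2(\pa\Om)}\le \|\mathcal{G}_k\|\,\|\mathcal{S}_k g^\eps_z-w_z\|_{H^1_{loc}(\R^2\ba\ov{D})}\to 0.
\]
For the uniform bound \eqref{Sg<M}: since $\mathcal{S}_k g^\eps_z\to w_z$ in $H^1_{loc}(\R^2\ba\ov{D})$, the sequence $\{\mathcal{S}_k g^\eps_z\}$ is bounded in that space; taking $M:=\sup_\eps\|\mathcal{S}_k g^\eps_z\|_{H^1_{loc}(\R^2\ba\ov{D})}+1<\infty$ (which depends on $z$) gives \eqref{Sg<M}. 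One should note that here, unlike in the classical $k\in\R_+$ LSM where $\|g^\eps_z\|_{L^2}$ blows up, the relevant quantity that stays bounded is the norm of the \emph{potential} $\mathcal{S}_k g^\eps_z$, not of $g^\eps_z$ itself; this is exactly why the theorem is phrased with \eqref{Sg<M} rather than a bound on $g^\eps_z$.

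The main obstacle, and the one point deserving genuine care rather than boilerplate, is establishing $\Phi_k(\cdot,z)\in\mathscr{R}(\mathcal{G}_k)$ in a self-contained way (rather than merely citing Lemma 2.10 of \cite{CCH2020}), since the rest is a soft functional-analytic consequence of Theorem \ref{S-Properties}. If I had to reprove it, I would argue: $\Phi_k(\cdot,z)$ restricted to $D$ satisfies the Helmholtz equation in $D$ (as $z\notin\ov D$), so it is the solution $u_w$ of the interior Dirichlet problem \eqref{IDbvp} with data $w|_{\pa D}$ where $w:=\Phi_k(\cdot,z)$; this $w$ is itself an outgoing solution of the Helmholtz equation in $\R^2\ba\ov D$, hence lies in the domain of $\mathcal{G}_k$, and by construction $\mathcal{G}_k w=u_w|_{\pa\Om}=\Phi_k(\cdot,z)|_{\pa\Om}$. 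The only subtlety is the interior uniqueness needed to identify $u_w$ with $\Phi_k(\cdot,z)$ on $D$, but $k\in\mathbb{C}\ba\R$ is not a Dirichlet eigenvalue of $-\Delta$ in $D$ by Lemma \ref{Uni-Interior}, so this is immediate. Having pinned this down, the theorem follows by assembling the three ingredients (range membership of $\Phi_k(\cdot,z)$ in $\mathcal{G}_k$, dense range of $\mathcal{S}_k$, boundedness of $\mathcal{G}_k$) as above.
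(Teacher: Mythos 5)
Your proposal follows essentially the same route as the paper: produce $w_z$ with $\mathcal{G}_k w_z=\Phi_k(\cdot,z)|_{\pa\Om}$, approximate $w_z$ by $\mathcal{S}_k g^\eps_z$ using the dense range statement of Theorem \ref{S-Properties}, then push through $\mathcal{G}_k$ for \eqref{Ng=Phi-D} and use convergence (equivalently, the triangle inequality) for \eqref{Sg<M}. One correction to your ``self-contained'' reproof of $\Phi_k(\cdot,z)\in\mathscr{R}(\mathcal{G}_k)$: you cannot take $w:=\Phi_k(\cdot,z)$ itself as the element of the domain of $\mathcal{G}_k$, since $\Phi_k(\cdot,z)$ is singular at $z\in\R^2\ba\ov{D}$ and hence is neither a solution of the Helmholtz equation in all of $\R^2\ba\ov{D}$ nor in $H^1_{loc}(\R^2\ba\ov{D})$ (in two dimensions the logarithmic singularity fails to be $H^1$ near $z$). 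The paper instead takes $w_z$ to be the \emph{outgoing solution of the exterior Dirichlet problem} with boundary data $\Phi_k(\cdot,z)|_{\pa D}$ --- this is exactly where the hypothesis that $k$ is not a Dirichlet pole is used --- and then identifies the interior solution with data $w_z|_{\pa D}=\Phi_k(\cdot,z)|_{\pa D}$ as $\Phi_k(\cdot,z)$ via Lemma \ref{Uni-Interior}, as you correctly do in the last step. With that substitution your argument coincides with the paper's.
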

\begin{proof}
Since $k\in {\mathbb C}_{-}$ is not a Dirichlet pole, the exterior Dirichlet problem is well posed. Denote by $w_z\in H^1_{loc}(\R^2\ba{\ov{D}})$ the outgoing solution with boundary value $w_z=\Phi_k(\cdot, z)$ on $\pa D$. Consequently, $\mathcal {G}_k w_z = \Phi_k(\cdot, z)$ on $\pa\Om$. Furthermore, using Theorem \ref{S-Properties}, we deduce that, for every $\eps>0$, there exists $g_\eps^z\in L^2(\pa\Om)$ such that
\be\label{Sg-wz}
\|\mathcal{S}_k g^{\eps}_z-w_z\|_{H^1_{loc}(\R^2\ba{\ov{D}})}<\frac{\eps}{\|\mathcal {G}_k\|}.
\en
The limit \eqref{Ng=Phi-D} follows immediately from the inequality \eqref{Sg-wz} and the factorization \eqref{Factorization}.
By the triangular inequality, the inequality \eqref{Sg-wz} implies
\ben
\|\mathcal{S}_k g^{\eps}_z\|_{H^1_{loc}(\R^2\ba{\ov{D}})}\leq \|w_z\|_{H^1_{loc}(\R^2\ba{\ov{D}})}+\frac{\eps}{\|\mathcal {G}_k\|}.
\enn
Then the inequality \eqref{Sg<M} holds by taking $M=\|w_z\|_{H^1_{loc}(\R^2\ba{\ov{D}})}+1/\|\mathcal {G}_k\|$.
\end{proof}

The existence of $g_\eps^z\in L^2(\pa\Om)$ such that the limit holds has been obtained in Theorem 2.11 in \cite{CCH2020}. We want to recall that the uniform boundedness of $\|\mathcal{S}_k g^{\eps}_z\|_{H^{1/2}(\pa D)}$ has also been proved in Theorem 2.11 in \cite{CCH2020}. As pointed out in \cite{CCH2020}, if $k\in {\mathbb C}_{-}$ is not a Dirichlet pole, the two norms $\|\mathcal{S}_k g^{\eps}_z\|_{H^{1/2}(\pa D)}$ and $\|\mathcal{S}_k g^{\eps}_z\|_{H^1_{loc}(\R^2\ba{\ov{D}})}$ are equivalent. However, this is not true if $k\in {\mathbb C}_{-}$ is a Dirichlet pole. The following Theorem \ref{pole-D} shows that, for $z\in\R^2\ba\ov{D}$, the norm $\|\mathcal{S}_k g^{\eps}_z\|_{H^1_{loc}(\R^2\ba{\ov{D}})}$ cannot be bounded if $k\in {\mathbb C}_{-}$ is a Dirichlet pole.

\begin{theorem}\label{pole-D}
Assume that $k\in {\mathbb C}_{-}$ is a Dirichlet pole and 
\be\label{Ng-Phi}
\lim_{\eps\rightarrow 0}\|\mathcal{N}_k g^{\eps}_z - \Phi_k(\cdot, z)\|_{L^2(\pa\Om)}=0
\en
holds for all $z\in \R^2\ba\ov{D}$.
Then, for almost every $z\in \R^2\ba\ov{D}$, $\|\mathcal{S}_k g^{\eps}_z\|_{H^{1}_{loc}(\R^2\ba\ov{D})}$ can not be bounded as $\eps\rightarrow 0$.
\end{theorem}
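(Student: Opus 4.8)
The plan is to argue by contradiction: suppose that on a set of positive measure $E\subset\R^2\ba\ov{D}$ the norms $\|\mathcal{S}_k g^{\eps}_z\|_{H^{1}_{loc}(\R^2\ba\ov{D})}$ stay bounded as $\eps\to0$. For each such $z$, boundedness of $\mathcal{S}_k g^\eps_z$ in the Hilbert space $H^1_{loc}(\R^2\ba\ov{D})$ (equivalently, in $H^1$ of a fixed large ball intersected with the exterior) lets us extract a weakly convergent subsequence $\mathcal{S}_k g^{\eps_j}_z \rightharpoonup v_z$. The limit $v_z$ is again an outgoing solution of the Helmholtz equation in $\R^2\ba\ov{D}$ (the outgoing/Green's representation property passes to weak limits because $\mathcal{N}_k$, restriction to $\pa D$, and the single-layer operator are continuous, and weak $H^1$ convergence gives strong $L^2$ convergence of traces on compact boundary pieces by compact embedding). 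Applying the factorization $\mathcal{N}_k=\mathcal{G}_k\mathcal{S}_k$ together with the continuity of $\mathcal{G}_k$, the hypothesis \eqref{Ng-Phi} forces $\mathcal{G}_k v_z=\Phi_k(\cdot,z)$ on $\pa\Om$, i.e. $u_{v_z}=\Phi_k(\cdot,z)$ on $\pa\Om$, where $u_{v_z}\in H^1(D)$ is the interior Helmholtz solution with boundary data $v_z|_{\pa D}$.

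Next I would use the interior uniqueness of $\Phi_k(\cdot,z)$: since $\Phi_k(\cdot,z)$ solves the Helmholtz equation in $D$ (because $z\notin\ov D$) and $u_{v_z}$ does too, and they agree on $\pa\Om$ with $\ov\Om\subset D$, the unique continuation principle gives $u_{v_z}=\Phi_k(\cdot,z)$ throughout $D\ba\ov\Om$, hence $u_{v_z}=\Phi_k(\cdot,z)$ on $\pa D$ as well. Therefore $v_z$ and $\Phi_k(\cdot,z)$ have the same Dirichlet trace on $\pa D$, and both are outgoing solutions of the Helmholtz equation in $\R^2\ba\ov{D}$; but $\Phi_k(\cdot,z)$ is \emph{not} a solution in $\R^2\ba\ov{D}$ because it is singular at the point $z\in\R^2\ba\ov{D}$. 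So $v_z-\Phi_k(\cdot,z)$ is not smooth at $z$, yet $v_z$ is. Here is where being at a Dirichlet pole matters: let $u^s_0$ be a nontrivial outgoing eigenfunction for the pole $k$ (zero Dirichlet data on $\pa D$). Then $v_z-(\text{the outgoing lift } w_z \text{ of }\Phi_k(\cdot,z)|_{\pa D})$ has zero trace on $\pa D$, so it is a multiple of $u^s_0$; this shows $v_z$ itself is determined only up to the eigenspace, and in particular the exterior Dirichlet problem with data $\Phi_k(\cdot,z)|_{\pa D}$ need not have a solution regular at $z$. I would make this precise by testing against the eigenfunction: pair the equation $u_{v_z}=\Phi_k(\cdot,z)$ on $\pa\Om$ (equivalently the identity $v_z|_{\pa D}=\Phi_k(\cdot,z)|_{\pa D}$) with $\pa u^s_0/\pa\nu$ over $\pa D$ via Green's theorem, and compare with the analogous identity produced by $\Phi_k(\cdot,z)$ as a point source — the Green's representation evaluates to $u^s_0(z)\neq0$ for $z$ outside the zero set of $u^s_0$, which is exactly "almost every $z$". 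This contradicts the compatibility condition that the boundedness hypothesis would impose (the right-hand side $\int_{\pa D}\Phi_k(\cdot,z)\,\pa u^s_0/\pa\nu\,ds$ must vanish for a solvable exterior problem), so the assumed boundedness fails off a measure-zero set.

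The main obstacle, and the step I would spend the most care on, is the passage to the weak limit and the identification of its properties: showing that the weak $H^1_{loc}$-limit of $\mathcal{S}_k g^{\eps_j}_z$ is genuinely an \emph{outgoing} Helmholtz solution (not merely a Helmholtz solution), and that $\mathcal{G}_k$ applied to it still equals the strong $L^2(\pa\Om)$-limit $\Phi_k(\cdot,z)$. This requires knowing that $\mathcal{G}_k:H^1_{loc}(\R^2\ba\ov{D})\to L^2(\pa\Om)$ is weakly continuous (it factors through the continuous trace $w\mapsto w|_{\pa D}$, the well-posed interior Dirichlet solve, and the compact interior-to-$\pa\Om$ restriction, so compactness gives strong convergence of $\mathcal{G}_k\mathcal{S}_k g^{\eps_j}_z$), and that the outgoing/Green's representation \eqref{outgoing} is preserved under weak limits, which follows since both integral operators in \eqref{outgoing} are bounded on the relevant trace spaces and the traces of $\mathcal{S}_k g^{\eps_j}_z$ on $\pa D$ converge strongly in $H^{1/2}\times H^{-1/2}$ by interior elliptic regularity away from $\pa\Om$. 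A secondary technical point is the measurability/"almost every" bookkeeping: one argues that if the conclusion failed, the bad set $E$ has positive measure, and within $E$ one can still choose $z$ outside the (measure-zero) nodal set of the fixed eigenfunction $u^s_0$, which is where the final contradiction $u^s_0(z)\neq0$ is extracted.
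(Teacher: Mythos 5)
Your overall strategy is the same as the paper's: argue by contradiction, extract a weak limit of $\mathcal{S}_k g^{\eps_n}_z$, use the compactness of the interior solution operator $G$ (so that $\mathcal{N}_k g^{\eps_n}_z=GT\mathcal{S}_k g^{\eps_n}_z$ converges strongly) together with interior unique continuation to conclude that the limit has Dirichlet trace $\Phi_k(\cdot,z)|_{\pa D}$, and then test against the normal derivative of a nontrivial pole eigenfunction $w_0$, identifying $\int_{\pa D}\Phi_k(\cdot,z)\,\pa w_0/\pa\nu\,ds$ with $-w_0(z)$ via Green's representation so that the conclusion fails only on the nodal set of $w_0$. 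Your per-$z$ formulation (contradiction for every $z$ with $w_0(z)\neq 0$) is in fact slightly cleaner than the paper's positive-measure-set argument, and the weak-limit bookkeeping you emphasize is handled in the paper exactly as you describe. The side remark that $v_z-w_z$ "is a multiple of $u^s_0$" tacitly assumes the eigenspace is one-dimensional; this is unjustified but also not needed for the argument.

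The genuine gap is the step you dispatch with "pair \dots via Green's theorem": the claim that solvability of the exterior Dirichlet problem with data $f$ by an \emph{outgoing} solution forces the compatibility condition $\int_{\pa D} f\,\pa w_0/\pa\nu\,ds=0$. This is the heart of the proof, and for $\Im k<0$ it does not follow from the usual "boundary term at infinity vanishes" reasoning, because outgoing solutions grow exponentially like $e^{-\Im(kr)}/\sqrt{r}$ and the integrand of $\int_{|x|=R}(v\,\pa_\nu w_0-w_0\,\pa_\nu v)\,ds$ is not small. The paper avoids infinity altogether and derives the orthogonality $\langle \wi v,\pa w_0/\pa\nu\rangle=0$ from the jump relations of the layer potentials $S$, $K$, $K'$ applied to the Green representations of $\wi v$ and $w_0$ on $\pa D$ (equations \eqref{wiv-b}--\eqref{wiw=0}). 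Your route can be repaired without layer potentials, but only by observing that for \emph{two} outgoing solutions the circle integral vanishes identically for every $R$: expanding both in the basis $H^{(1)}_n(kr)e^{in\phi}$, the Wronskian-type combination cancels term by term because both expansions involve only Hankel functions of the first kind. Either this observation or the paper's jump-relation computation must be supplied; as written, the compatibility condition is asserted rather than proved, and it is precisely the point where being at a scattering pole with $\Im k<0$ (rather than at a real eigenvalue) makes the argument nonstandard.
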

\begin{proof}
Assume on the contrary that
\ben
\|\mathcal{S}_k g^{\eps_n}_z\|_{H^{1}_{loc}(\R^2\ba\ov{D})}\leq M
\enn
for some constant $M>0$, some sequence $\eps_n\rightarrow 0$ and all
$z\in D_0\subset \R^2\ba\ov{D}$ where $D_0$ has a positive measure. Hence, by selecting subsequences and relabeling, there exists a weakly convergent sequence  $v_n:=\mathcal{S}_k g^{\eps_n}_z \rightharpoonup v$ for some $v\in H^{1}_{loc}(\R^2\ba\ov{D})$. Denote by $T: H^{1}_{loc}(\R^2\ba\ov{D}) \to H^{1/2}(\pa D)$ the trace operator.
Then $T v_n \rightharpoonup Tv$ on $H^{1/2}(\pa D)$.

Let $\wi{v}\in H^{1}_{loc}(\R^2\ba\ov{D})$ be an outgoing solution to the Helmholtz equation in $\R^2\ba\ov{D}$ with boundary value $\wi{v}=v$ on $\pa D$. This implies $T\wi{v}=Tv$ and therefore $T v_n \rightharpoonup T\wi{v}$ on $H^{1/2}(\pa D)$.
By Green's representation theorem, we have that
\be\label{wiv}
\wi{v}(x)=\int_{\pa D}\left\{\frac{\pa \Phi_k(x,y)}{\pa\nu(y)}\wi{v}(y)-\Phi_k(x,y)\frac{\pa \wi{v}}{\pa\nu}(y)\right\}ds(y),\quad x\in \R^2\ba\ov{D}.
\en
Note that $\mathcal{G}_k = GT$ where $G: H^{1/2}(\pa D) \to L^2(\pa \Om)$ is the compact operator defined by \eqref{G-Def}.
Furthermore, with the help of the factorization $\mathcal{N}_k=\mathcal{G}_k\mathcal{S}_k$, we have that
\be\label{Gvn-Gv}
\mathcal{N}_k g^{\eps}_z = \mathcal{G}_k\mathcal{S}_k g^{\eps}_z = \mathcal{G}_k v_n = GT v_n \rightarrow GT \wi{v} = \mathcal{G}_k \wi{v} \quad\mbox{in}\, L^2(\pa\Om).
\en
Denote by $u_z$ the solution of
\ben
\Delta u_z +k^2 u_z = 0 \quad\mbox{in}\, D, \qquad u_z=\wi{v}|_{\pa D}\quad\mbox{on}\, \pa D.
\enn
Then $\mathcal{G}_k \wi{v} = u_z$ on $L^2(\pa\Om)$. Combination of \eqref{Gvn-Gv} and \eqref{Ng-Phi} yields that $u_z=\Phi(\cdot,z)$ on $\pa\Om$. Using Lemma \ref{Uni-Interior} and the analytic continuation, we have $u_z=\Phi(\cdot, z)$ in $D$ and thus
\be\label{wiv=Phi}
\wi{v}=\Phi(\cdot, z)\quad\mbox{on}\, \pa D.
\en

We recall the single and double layer operator $S$ and $K$ given by
\be
S\phi(x)&:=&\int_{\pa D}\Phi_k(x,y)\phi(y)ds(y), \quad x\in\pa D,\cr
\label{K}K\phi(x)&:=&\int_{\pa D}\frac{\pa\Phi_k(x,y)}{\pa\nu(y)}\phi(y)ds(y), \quad x\in\pa D
\en
and the normal derivative operator $K^{'}$ given by
\ben
K^{'}\phi(x):=\int_{\pa D}\frac{\pa\Phi_k(x,y)}{\pa\nu(x)}\phi(y)ds(y), \quad x\in\pa D.
\enn
By the jump relation, the representation \eqref{wiv} implies
\be\label{wiv-b}
\wi{v}|_{\pa D} = \Big(\frac{1}{2}I+K\Big)\wi{v}|_{\pa D} - S \frac{\pa\wi{v}}{\pa\nu}\Big|_{\pa D}.
\en

Since $k\in {\mathbb C}_{-}$ is a Dirichlet pole, we take $w_0$ be a non-trivial outgoing solution in $\R^2\ba\ov{D}$ with homogeneous boundary data $w_0= 0$ on $\pa D$.
By Green's representation theorem again,
\ben
w_0(x)
&=&\int_{\pa D}\left\{\frac{\pa \Phi_k(x,y)}{\pa\nu(y)}w_0(y)-\Phi_k(x,y)\frac{\pa w_0}{\pa\nu}(y)\right\}ds(y)\cr
&=&-\int_{\pa D}\Phi_k(x,y)\frac{\pa w_0}{\pa\nu}(y)ds(y),\quad x\in \R^2\ba\ov{D}.
\enn
Letting $x$ approach the boundary, we have that
\be\label{w0-b}
S\frac{\pa w_0}{\pa\nu} = 0 \quad\mbox{and}\quad \Big(\frac{1}{2}I+K^{'}\Big)\frac{\pa w_0}{\pa\nu} = 0 \qquad \mbox{on}\, \pa D.
\en

Interchanging the order of integration, we observe that $S$ is self-adjoint and
$K$ and $K^{'}$ are adjoint with respect to the bilinear form
\be\label{bilinearform}
\langle\phi,\psi\rangle:=\int_{\pa D} \phi\psi ds.
\en
With the help of \eqref{wiv-b} and \eqref{w0-b}, we have that
\be\label{wiw=0}
\left\langle\wi{v},\frac{\pa w_0}{\pa\nu}\right\rangle
&=&\left\langle\Big(\frac{1}{2}I+K\Big)\wi{v} - S \frac{\pa\wi{v}}{\pa\nu},\frac{\pa w_0}{\pa\nu}\right\rangle\cr
&=&\left\langle\wi{v}, \Big(\frac{1}{2}I+K^{'}\Big)\frac{\pa w_0}{\pa\nu}\right\rangle - \left\langle \frac{\pa\wi{v}}{\pa\nu}, S\frac{\pa w_0}{\pa\nu}\right\rangle\cr
&=& 0.
\en

Define the single layer potential
\ben
\wi{w}(z):=\int_{\pa D}\Phi_k(y, z)\frac{\pa w_0}{\pa\nu}(y)ds(y), \quad z\in \R^2\ba\pa D.
\enn
Then $\wi{w}=0$ in $D_0$ by \eqref{wiw=0} and \eqref{wiv=Phi}. By analytic continuation, $\wi{w}=0$ in $\R^2\ba\ov{D}$. Taking the trace and using Lemma \ref{Uni-Interior},
$\wi{w}=0$ in $D$. Then $\frac{\pa w_0}{\pa\nu}=0$ on $\pa D$ by the jump relation of the single layer potential. Using the homogeneous Dirichlet boundary condition $w_0=0$ on $\pa D$, we have $w_0=0$ in $\R^2\ba\ov{D}$ by the Holmgren's theorem. This is a contradiction to the fact that $w_0$ is a non-trivial outgoing solution in $\R^2\ba\ov{D}$.
\end{proof}

Let $g_z^\eps$ be an approximate solution of
\begin{equation}\label{Near-Field-Equation}
(\mathcal{N}_k g)(x) = \Phi_k(x, z),\quad  x \in \pa \Om, z\in \R^2\ba\ov{D}.
\end{equation}
Due to Theorems \ref{pole-D} and \ref{LSM-D}, one can use $\|\mathcal{S}_k g^{\eps}_z\|_{H^1_{loc}(\R^2\ba{\ov{D}})}$ to characterize the Dirichlet poles. Based on the analysis in the next subsection for the Neumann case, $\|\mathcal{S}_k g^{\eps}_z\|_{H^1_{loc}(\R^2\ba{\ov{D}})}$ can also be used to characterize the Neumann poles.



\subsection{Neumann Case}\label{subsec-N}
In this subsection we consider Neumann boundary conditions.
For an outgoing solution $w\in H^1_{loc}(\R^2\ba{\ov{D}})$, the operator $\mathcal {G}_k: H^1_{loc}(\R^2\ba{\ov{D}}) \to L^2(\pa \Om)$ is now defined by $\mathcal {G}_k w=u_w|_{\pa \Om}$, where $u_w\in H^1(D)$ solves
\ben
\Delta u_w+k^2 u_w= 0 \quad\text{in } D\quad\mbox{and}\quad \frac{\pa u_w}{\pa\nu} = \frac{\pa w}{\pa\nu}  \quad\text{on } \pa D.
\enn
The near field operator $\mathcal{N}_k: L^2(\pa \Om) \to L^2(\pa \Om)$ has a factorization
     \be\label{Factorization-N}
     \mathcal{N}_k=\mathcal{G}_k\mathcal{S}_k;
     \en

\begin{theorem} \label{LSM-N}
Assume that $k\in C_{-}$ is not a Neumann pole. 
Let $z\in \R^2\ba\ov{D}$. Then for every $\eps>0$, there exists $g_\eps^z\in L^2(\pa\Om)$ such that
\ben
\lim_{\eps\rightarrow 0}\|\mathcal{N}_k g^{\eps}_z - \Phi_k(\cdot, z)\|_{L^2(\pa\Om)}=0\quad\mbox{and}\quad \|\mathcal{S}_k g^{\eps}_z\|_{H^1_{loc}(\R^2\ba{\ov{D}})} < M
\enn
for some constant $M>0$. 
\end{theorem}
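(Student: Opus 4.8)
The plan is to mirror, step for step, the proof of Theorem~\ref{LSM-D}, replacing the exterior and interior Dirichlet problems by their Neumann analogues. The three facts I will lean on are: the exterior Neumann problem is well posed since $k\in\C_{-}$ is not a Neumann pole; the single layer potential $\mathcal{S}_k$ has dense range in $H^1_{loc}(\R^2\ba\ov{D})$ (this is exactly the ``$k$ not a Neumann pole'' case of Theorem~\ref{S-Properties}); and the near field operator factorizes as $\mathcal{N}_k=\mathcal{G}_k\mathcal{S}_k$ by \eqref{Factorization-N}, where $\mathcal{G}_k$ is bounded — indeed compact — because the interior Neumann problem is uniquely solvable ($k\in\C_{-}\subset\C\ba\R$ is not a Neumann eigenvalue, Lemma~\ref{Uni-Interior}) together with interior regularity.

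The one genuinely new ingredient is the Neumann counterpart of the fact (Lemma~2.10 of \cite{CCH2020}) that $\Phi_k(\cdot,z)\in\mathscr{R}(\mathcal{G}_k)$ for $z\in\R^2\ba\ov{D}$. I would argue as follows: since $z\notin\ov{D}$, the function $\Phi_k(\cdot,z)$ is a classical solution of the Helmholtz equation in a neighbourhood of $\ov{D}$, so its Neumann trace $\pa\Phi_k(\cdot,z)/\pa\nu$ is a well-defined element of $H^{-1/2}(\pa D)$; solving the exterior Neumann problem with this datum produces an outgoing $w_z\in H^1_{loc}(\R^2\ba\ov{D})$ with $\pa w_z/\pa\nu=\pa\Phi_k(\cdot,z)/\pa\nu$ on $\pa D$. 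Then $\Phi_k(\cdot,z)$ and the interior solution $u_{w_z}$ used to define $\mathcal{G}_k w_z$ both solve the interior Neumann problem in $D$ with the same Neumann datum $\pa w_z/\pa\nu$, so Lemma~\ref{Uni-Interior} forces them to coincide in $D$; hence $\mathcal{G}_k w_z=\Phi_k(\cdot,z)$ on $\pa\Om$.

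Given this, the conclusion is immediate. Note $\mathcal{G}_k\neq 0$ (otherwise $\Phi_k(\cdot,z)|_{\pa\Om}=0$, which is false). Since $\mathscr{R}(\mathcal{S}_k)$ is dense, for each $\eps\in(0,1)$ I choose $g_\eps^z\in L^2(\pa\Om)$ with $\|\mathcal{S}_k g_\eps^z-w_z\|_{H^1_{loc}(\R^2\ba\ov{D})}<\eps/\|\mathcal{G}_k\|$. Applying $\mathcal{G}_k$ and using \eqref{Factorization-N} gives $\|\mathcal{N}_k g_\eps^z-\Phi_k(\cdot,z)\|_{L^2(\pa\Om)}=\|\mathcal{G}_k(\mathcal{S}_k g_\eps^z-w_z)\|_{L^2(\pa\Om)}<\eps$, and the triangle inequality yields $\|\mathcal{S}_k g_\eps^z\|_{H^1_{loc}(\R^2\ba\ov{D})}\le\|w_z\|_{H^1_{loc}(\R^2\ba\ov{D})}+1/\|\mathcal{G}_k\|=:M$, which are precisely the two assertions.

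I do not anticipate a deep obstacle; the main thing to get right is the Sobolev-regularity bookkeeping for Neumann traces — that the Neumann trace of an outgoing $H^1_{loc}$ solution lies in $H^{-1/2}(\pa D)$ (via integration by parts, using $\Delta w=-k^2w\in L^2_{loc}$), that the interior Neumann solution operator is bounded $H^{-1/2}(\pa D)\to H^1(D)$, and that \eqref{Factorization-N} is consistent with the sign conventions in the definitions of $u_k^s$ and $\mathcal{S}_k$ — each being the routine Neumann analogue of a fact already used implicitly in the Dirichlet development.
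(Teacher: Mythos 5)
Your proposal is correct and follows essentially the same route as the paper: the authors likewise take $w_z$ to be the outgoing solution of the exterior Neumann problem with datum $\pa\Phi_k(\cdot,z)/\pa\nu$ on $\pa D$, observe that $\mathcal{G}_k w_z=\Phi_k(\cdot,z)$ on $\pa\Om$ (since $\Phi_k(\cdot,z)$ solves the interior Neumann problem with the same datum), and then repeat the density-plus-triangle-inequality argument of Theorem~\ref{LSM-D} using Theorem~\ref{S-Properties} and the factorization \eqref{Factorization-N}. Your write-up in fact makes explicit the uniqueness step (via Lemma~\ref{Uni-Interior}) that the paper leaves implicit in the phrase ``Then $\mathcal{G}_k w=\Phi_k(\cdot,z)$.''
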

\begin{proof}
     For $z\in\R^2\ba\ov{D}$,  $\Phi_k(\cdot, z)$ is a solution to the Helmholtz equation in $D$. Since $k\in {\mathbb C}_{-}$ is not a Neumann pole, the exterior Neumann boundary problem \eqref{Helmholtz}, \eqref{SoundHard}-\eqref{outgoing} is uniquely solvable. Denote by $w\in H^1_{loc}(\R^2\ba\ov{D})$ the outgoing solution with boundary data $\pa w/\pa\nu=\pa\Phi_k(\cdot, z)/\pa\nu$ on $\pa D$. Then $\mathcal {G}_k w=\Phi_k(\cdot, z)$, i.e., $\Phi_k(\cdot, z)\in \mathscr{R}(\mathcal {G}_k)$.
     The results then follows by a similar proof of Theorem \ref{LSM-D}.
\end{proof}

We have the following analogy of Theorem \ref{pole-D}.

\begin{theorem}\label{pole-N}
Assume that $k\in C_{-}$ is a Neumann pole and 
\be\label{Ng-Phi-N}
\lim_{\eps\rightarrow 0}\|\mathcal{N}_k g^{\eps}_z - \Phi_k(\cdot, z)\|_{L^2(\pa\Om)}=0.
\en
holds for all $z\in \R^2\ba\ov{D}$.
Then, for almost every $z\in \R^2\ba\ov{D}$, $\|\mathcal{S}_k g^{\eps}_z\|_{H^{1}_{loc}(\R^2\ba\ov{D})}$ can not be bounded as $\eps\rightarrow 0$.
\end{theorem}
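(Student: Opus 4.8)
The plan is to argue by contradiction, running the proof of Theorem~\ref{pole-D} with the Dirichlet data systematically replaced by the Neumann data. Suppose the conclusion fails: there are $M>0$, a sequence $\eps_n\to0$, and a measurable set $D_0\subset\R^2\ba\ov{D}$ of positive measure with $\|\mathcal{S}_k g^{\eps_n}_z\|_{H^1_{loc}(\R^2\ba\ov{D})}\le M$ for all $z\in D_0$. Fix $z\in D_0$; passing to a subsequence, $v_n:=\mathcal{S}_k g^{\eps_n}_z\rightharpoonup v$ in $H^1_{loc}(\R^2\ba\ov{D})$. Each $v_n$ is an outgoing solution, interior elliptic estimates give $v_n\to v$ locally uniformly in $\R^2\ba\ov{D}$, and the boundary traces converge weakly in $H^{1/2}(\pa D)\times H^{-1/2}(\pa D)$; hence the Green representation \eqref{outgoing} passes to the limit and $v$ is again an outgoing solution, the analogue of $\wi v$ in the proof of Theorem~\ref{pole-D}.

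The first substantive step identifies $\pa v/\pa\nu$ on $\pa D$, replacing the identity $\wi v=\Phi_k(\cdot,z)$ on $\pa D$ used in the Dirichlet case. In the Neumann geometry $\mathcal{G}_k=G\,T'$, where $T'$ is the normal-trace map (bounded and weakly continuous on sequences of Helmholtz solutions, so $T'v_n\rightharpoonup T'v$ in $H^{-1/2}(\pa D)$) and $G:H^{-1/2}(\pa D)\to L^2(\pa\Om)$ is the compact solution operator of the interior Neumann problem. Using the factorization \eqref{Factorization-N} and the hypothesis \eqref{Ng-Phi-N},
\[
\Phi_k(\cdot,z)=\lim_n\mathcal{N}_k g^{\eps_n}_z=\lim_n G\,T'v_n=G\,T'v=\mathcal{G}_k v\quad\text{on }\pa\Om,
\]
the third equality because $G$ is compact. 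Let $u_z\in H^1(D)$ solve the interior Neumann problem with $\pa u_z/\pa\nu=\pa v/\pa\nu$ on $\pa D$; then $u_z=\Phi_k(\cdot,z)$ on $\pa\Om$. Since $k\in\C\ba\R$ is not a Dirichlet eigenvalue of $\Om$ (Lemma~\ref{Uni-Interior}) and both $u_z$ and $\Phi_k(\cdot,z)$ solve the Helmholtz equation in $\Om$, they agree in $\Om$, hence in $D$ by analytic continuation; comparing normal traces on $\pa D$ gives $\pa v/\pa\nu=\pa\Phi_k(\cdot,z)/\pa\nu$ on $\pa D$ for every $z\in D_0$.

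The last step is the bilinear-form argument of \eqref{wiw=0}, taken in its dual form. Since $k$ is a Neumann pole, fix a nontrivial outgoing solution $w_0$ with $\pa w_0/\pa\nu=0$ on $\pa D$; by \eqref{outgoing}, $w_0$ equals the double-layer potential of its Dirichlet trace $w_0|_{\pa D}$, so the jump relations yield $(-\tfrac12 I+K)\,w_0|_{\pa D}=0$ and $N[w_0|_{\pa D}]=\pa w_0/\pa\nu=0$, where $K$ is the operator \eqref{K} and $N$ is the hypersingular operator (the normal derivative of the double-layer potential), which is symmetric with respect to the bilinear form \eqref{bilinearform}; recall also that $K'$ and $K$ are adjoint for that form. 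Taking the exterior normal trace of the Green representation \eqref{outgoing} for $v$ gives $\pa v/\pa\nu=N[v|_{\pa D}]-(-\tfrac12 I+K')\,\pa v/\pa\nu$ on $\pa D$, so, pairing with $w_0|_{\pa D}$,
\[
\Big\langle\frac{\pa v}{\pa\nu},\,w_0|_{\pa D}\Big\rangle=\big\langle v|_{\pa D},\,N[w_0|_{\pa D}]\big\rangle-\Big\langle\frac{\pa v}{\pa\nu},\,\big(-\tfrac12 I+K\big)w_0|_{\pa D}\Big\rangle=0.
\]
Defining the double-layer potential $\wi w(z):=\int_{\pa D}\frac{\pa\Phi_k(y,z)}{\pa\nu(y)}w_0(y)\,ds(y)$ and invoking the reciprocity of $\Phi_k$ together with $\pa v/\pa\nu=\pa\Phi_k(\cdot,z)/\pa\nu$ on $\pa D$, we obtain $\wi w(z)=0$ for all $z\in D_0$; since $\wi w$ is real-analytic in the connected open set $\R^2\ba\ov{D}$ and its zero set there has positive measure, $\wi w$ vanishes identically in $\R^2\ba\ov{D}$ by analytic continuation. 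But the Green representation \eqref{outgoing} for $w_0$, with $\pa w_0/\pa\nu=0$, says precisely that $\wi w=w_0$ in $\R^2\ba\ov{D}$. Hence $w_0=0$, contradicting its nontriviality.

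The crux is this last step. The Dirichlet proof gets by with only the operators $S$, $K$, $K'$ — exactly those appearing in the boundary integral equation \eqref{wiv-b} — because the Cauchy data of $w_0$ there involves $S$ and $K'$; in the Neumann case the Cauchy data of $w_0$ inevitably brings in the hypersingular operator $N$, so one must use the normal-trace form of the Green representation and the symmetry of $N$ (standard for Lipschitz boundaries). A secondary technical point, not visible at the level of statements in the Dirichlet argument, is the weak continuity of the normal trace $T'$ along weakly convergent sequences of Helmholtz solutions, which is what lets the compact operator $G$ pass to the limit.
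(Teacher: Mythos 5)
Your proof is correct and follows the paper's argument in all essentials: the same contradiction setup via weak convergence and compactness of $G$, the same identification $\pa v/\pa\nu=\pa\Phi_k(\cdot,z)/\pa\nu$ on $\pa D$ through the factorization $\mathcal{N}_k=\mathcal{G}_k\mathcal{S}_k$, and the same duality pairing using the self-adjointness of the hypersingular operator to make the double-layer potential $\wi w$ vanish on $D_0$ and hence everywhere. Two of your local choices are in fact slight streamlinings of the paper's version: working with the weak limit $v$ directly as an outgoing solution avoids invoking solvability of the exterior Neumann problem at a wavenumber where it fails to be well-posed (the paper's auxiliary $\wi v$), and noting that $\wi w=w_0$ in $\R^2\ba\ov{D}$ (since $\pa w_0/\pa\nu=0$ kills the single-layer term in the Green representation) yields the contradiction immediately, whereas the paper routes through the Neumann trace, the jump relation of the double-layer potential, and Holmgren's theorem.
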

\begin{proof}
Assume on the contrary that
\ben
\|\mathcal{S}_k g^{\eps_n}_z\|_{H^{1}_{loc}(\R^2\ba\ov{D})}\leq M
\enn
for some constant $M>0$, some sequence $\eps_n\rightarrow 0$ and all
$z\in D_0\subset \R^2\ba\ov{D}$, where $D_0$ has a positive measure. By selecting subsequences and relabeling, there exists a  weakly convergent sequence  $v_n:=\mathcal{S}_k g^{\eps_n}_z \rightharpoonup v$ for some $v\in H^{1}_{loc}(\R^2\ba\ov{D})$. Denote by $T: H^{1}_{loc}(\R^2\ba\ov{D}) \to H^{-1/2}(\pa D)$ the Neumann trace operator. Then $T v_n \rightharpoonup Tv$ in $H^{1/2}(\pa D)$.

Let $\wi{v}\in H^{1}_{loc}(\R^2\ba\ov{D})$ be an outgoing solution to the Helmholtz equation in $\R^2\ba\ov{D}$ with Neumann boundary value
$\frac{\pa\wi{v}}{\pa\nu}=Tv$ on $\pa D$.
This implies $T\wi{v}=Tv$ and therefore $T v_n \rightharpoonup T\wi{v}$ in $H^{-1/2}(\pa D)$.
Note that $\mathcal{G}_k = GT$, where $G: H^{-1/2}(\pa D) \to L^2(\pa \Om)$ is the compact operator defined by \eqref{G-Def}.
Furthermore, using the factorization $\mathcal{N}_k=\mathcal{G}_k\mathcal{S}_k$, we have that
\be\label{Gvn-Gv-N}
\mathcal{N}_k g^{\eps}_z = \mathcal{G}_k\mathcal{S}_k g^{\eps}_z = \mathcal{G}_k v_n = GT v_n \rightarrow GT \wi{v} = \mathcal{G}_k \wi{v} \quad\mbox{in}\, L^2(\pa\Om).
\en
Denote by $u_z$ the solution of
\ben
\Delta u_z +k^2 u_z = 0 \quad\mbox{in}\, D, \qquad \frac{\pa u_z}{\pa\nu}=T\wi{v}\quad\mbox{on}\, \pa D.
\enn
Then $\mathcal{G}_k \wi{v} = u_z$ in $L^2(\pa\Om)$. Combination of \eqref{Gvn-Gv} and \eqref{Ng-Phi} yields that $u_z=\Phi(\cdot,z)$ on $\pa\Om$. Using Lemma \ref{Uni-Interior} and analytic continuation, we have that $u_z=\Phi(\cdot, z)$ in $D$ and thus
\be\label{wiv=Phi-N}
T\wi{v}=\frac{\pa\Phi}{\pa\nu}(\cdot, z)\quad\mbox{on}\, \pa D.
\en

Recalling the double-layer operator $K$ defined by \eqref{K}, we introduce the normal derivative operator $\mathbb{T}$ of $K$ by
\ben
\mathbb{T}\phi(x):=\frac{\pa}{\pa\nu(x)}\int_{\pa D}\frac{\pa\Phi_k(x,y)}{\pa\nu(y)}\phi(y)ds(y), \quad x\in\pa D.
\enn
Then $\mathbb{T}$ is self-adjoint with respect to the bilinear form \eqref{bilinearform}.
Since $\wi{v}$ is an outgoing solution, we obtain that
\be\label{wiv-N}
\wi{v}(x)=\int_{\pa D}\left\{\frac{\pa \Phi_k(x,y)}{\pa\nu(y)}\wi{v}(y)-\Phi_k(x,y)\frac{\pa \wi{v}}{\pa\nu}(y)\right\}ds(y),\quad x\in \R^2\ba\ov{D}.
\en
Using the jump relation, \eqref{wiv-N} implies
\be\label{wiv-b-N}
T\wi{v} = \mathbb{T}\wi{v}|_{\pa D}+\Big(\frac{1}{2}I-K^{'}\Big)\frac{\pa\wi{v}}{\pa\nu}\Big|_{\pa D}.
\en

Since $k\in {\mathbb C}_{-}$ is a Neumann pole, we take $w_0$ as a non-trivial outgoing solution in $\R^2\ba\ov{D}$ with homogeneous Neumann boundary data $\frac{\pa w_0}{\pa\nu}= 0$ on $\pa D$.
By the outgoing condition again, 
\ben
w_0(x)
&=&\int_{\pa D}\left\{\frac{\pa \Phi_k(x,y)}{\pa\nu(y)}w_0(y)-\Phi_k(x,y)\frac{\pa w_0}{\pa\nu}(y)\right\}ds(y)\cr
&=&\int_{\pa D}\frac{\pa \Phi_k(x,y)}{\pa\nu(y)}w_0(y)ds(y),\quad x\in \R^2\ba\ov{D}.
\enn
Letting $x$ approach the boundary, we have that
\be\label{w0-b-N}
\mathbb{T}w_0 = 0 \quad\mbox{and}\quad \Big(\frac{1}{2}I-K\Big) w_0 = 0 \qquad \mbox{on}\, \pa D.
\en

With the help of \eqref{wiv-b-N} and \eqref{w0-b-N}, noting that $\mathbb{T}$ is self-adjoint and $K$ and $K^{'}$ are adjoint with respect to the bilinear form \eqref{bilinearform}, it holds that that
\be\label{wiw=0-N}
\left\langle T\wi{v}, w_0\right\rangle
&=&\left\langle\mathbb{T}\wi{v}|_{\pa D}+\Big(\frac{1}{2}I-K^{'}\Big)\frac{\pa\wi{v}}{\pa\nu}\Big|_{\pa D},\, w_0\right\rangle\cr
&=&\left\langle\wi{v}+\frac{\pa\wi{v}}{\pa\nu},\, \mathbb{T}w_0+\Big(\frac{1}{2}I-K\Big)w_0\right\rangle\cr
&=&0.
\en

Define the double-layer potential
\ben
\wi{w}(z):=\int_{\pa D}\frac{\pa\Phi_k(y, z)}{\pa\nu(y)}  w_0(y) ds(y), \quad z\in \R^2\ba\pa D.
\enn
Then $\wi{w}=0$ in $D_0$ by \eqref{wiw=0-N} and \eqref{wiv=Phi-N}. By analytic continuation, $\wi{w}=0$ in $\R^2\ba\ov{D}$. Taking the Neumann trace and using Lemma \ref{Uni-Interior},
$\wi{w}=0$ in $D$. Then $w_0=0$ on $\pa D$ by the jump relation of the double-layer potential. Due to the homogeneous Neumann boundary condition $\frac{\pa w_0}{\pa\nu}=0$ on $\pa D$, $w_0=0$ in $\R^2\ba\ov{D}$ by the Holmgren's theorem. This is a contradiction to the fact that $w_0$ is non-trivial outgoing solution in $\R^2\ba\ov{D}$.
\end{proof}

\section{Numerical Examples}
We now present some examples to validate the inside-out duality developed in the previous section, i.e., characterize the scattering poles using the interior scattering problems. More precisely, we solve the near field equation \eqref{Near-Field-Equation} for some point $z$ outside the obstacle and check the norm of the approximate solutions for various $k$'s. We expect that the norm is large when $k$ is a pole. The "exact" poles are computed by a finite element DtN method proposed in \cite{XiGongSun2024}.

For a sound-soft obstacle and a fixed $k$, we use a linear Lagrange finite element method to compute the scattering data in $D$ with the Dirichlet data $f(x) = -\Phi_k(x, y), y \in \partial \Omega, x \in D$. To discretize \eqref{Near-Field-Equation}, we collect data $u^s_k(x_j, y_i)$, $x_i, y_j \in \partial \Omega,  i,j=1, \ldots, N$, which are computed as follows. For each $y_j$, one computes $u^s_k(x_j, y_i), i=1, \ldots, N$.
In other words, one solves $N$ Helmholtz equations for each $k$. We use the same set of discretization points on $\partial \Omega$ for $g(y)$ and denote the vector by ${\boldsymbol g}^k$ with ${\boldsymbol g}^k_i=g(y_i), i=1, \ldots, N,$ for some $z \in \mathbb R^2 \setminus \overline{D}$. The discrete version of \eqref{Near-Field-Equation}
becomes a system of equations
\begin{equation}\label{DNF}
\sum_{i=1}^N \omega_i u_k^s(x_j, y_i) {\boldsymbol g}^k_i = \Phi_k(x_j, z), \quad j=1, \ldots, N,
\end{equation}
where $\omega_i$'s are the weights of the quadrature rule for the integral over $\partial \Omega$. If $\partial \Omega$ is a circle, then one can simply use the trapezoidal rule such that $\omega_1=\omega_2 = \ldots =\omega_N$.

Let $S \subset \mathbb C_-$ be a region of interest. Let $|{\boldsymbol g}^k|$ be the norm of the solution of \eqref{DNF}. We expect that $|{\boldsymbol g}^k|$ becomes larger as $k$ approaches a scattering pole. We compute $|{\boldsymbol g}^k|$'s for a set $\{k_i\} \subset S$ of sampling points that covers $S$ and look for locations where $|{\boldsymbol g}^k|$'s are large. The case for a sound-hard obstacle is treated similarly.

\begin{remark}
We use $l_2$ norm of ${\boldsymbol g}^k$, not the respective norms in $H^{1/2}(\partial D)$ and $H_{loc}^1(\mathbb R^2 \setminus \overline{D})$. Numerical examples in this paper and results for the linear sampling method in literature suggest that such a simplification works well.
\end{remark}

\begin{remark}
For a fixed $z \in \mathbb R^2 \setminus \overline{D}$, $\Phi(\cdot, z)|_{\partial \Omega}$ is a smooth function of $k$. In stead of solving the integral equation, one can also use the condition number of the matrix $U(k)$ where $U_{i,j}(k) = u^s_k(x_i, y_j)$ to characterize the scattering poles. 
\end{remark}

\begin{remark}
Numerical experiments indicate that the choice $z \in \mathbb R^2 \setminus \overline{D}$ does not affect the result significantly as long as $z$ is not too far away from $\partial D$.
\end{remark}

\subsection{Sound-soft Obstacles}

{\bf Example 1.}
Let $D$ be the unit disk. Using separation of variables, it can be derived that the scattering poles are the zeros of Hankel functions $H_n^{(1)}(k), n \ge 2$ (see Table 1 of \cite{MaSun2023}). In particular, two poles with smallest norms are given by $k_1=0.4295-1.2814i$ and $k_2=1.3080-1.6818i$.

Let $\partial \Omega$ be a circle with radius $0.7$ inside $D$. We choose $40$ points $y_j, j=1, \ldots, 40,$ uniformly on $\partial \Omega$. The same set of points is also used for $x_i$'s. Let $z=(1,1)$, outside $D$. We generate a triangular mesh with mesh size $h\approx 0.025$ for the finite element method to compute $u^s_k(x_i, y_j)$. Then we solve \eqref{DNF} for ${\boldsymbol g}^k$ using the Tikhonov regularization. 

Let $S=[0.42, 0.44]\times[-1.29,-1.27]$ be a small region containing $k_1$. We choose $41 \times 41$ $k$'s uniformly in $S$ and compute $|{\boldsymbol g}^k|$'s. We expect that $|{\boldsymbol g}^k|$'s are large when the sampling points $k$'s are close to $k_1$. In deed, the plot of $|{\boldsymbol g}^k|$'s has a spike at $0.4285-1.2805i$, which is shown in the left picture of Fig.~\ref{SScircle}. 

Similarly, let $S=[1.30, 1.32]\times[-1.29,-1.27]$ be a small region containing $k_2$. Again, we choose $41 \times 41$ $k$'s uniformly in $S$ and compute $|{\boldsymbol g}^k|$'s. We plot $|{\boldsymbol g}^k|$'s (right picture of Fig.~\ref{SScircle}) and a spike is found at $1.3080-1.6820 i$. Both values are good approximations of the exact poles.

\begin{figure}[ht]
\begin{center}
\begin{tabular}{cc}
\resizebox{0.5\textwidth}{!}{\includegraphics{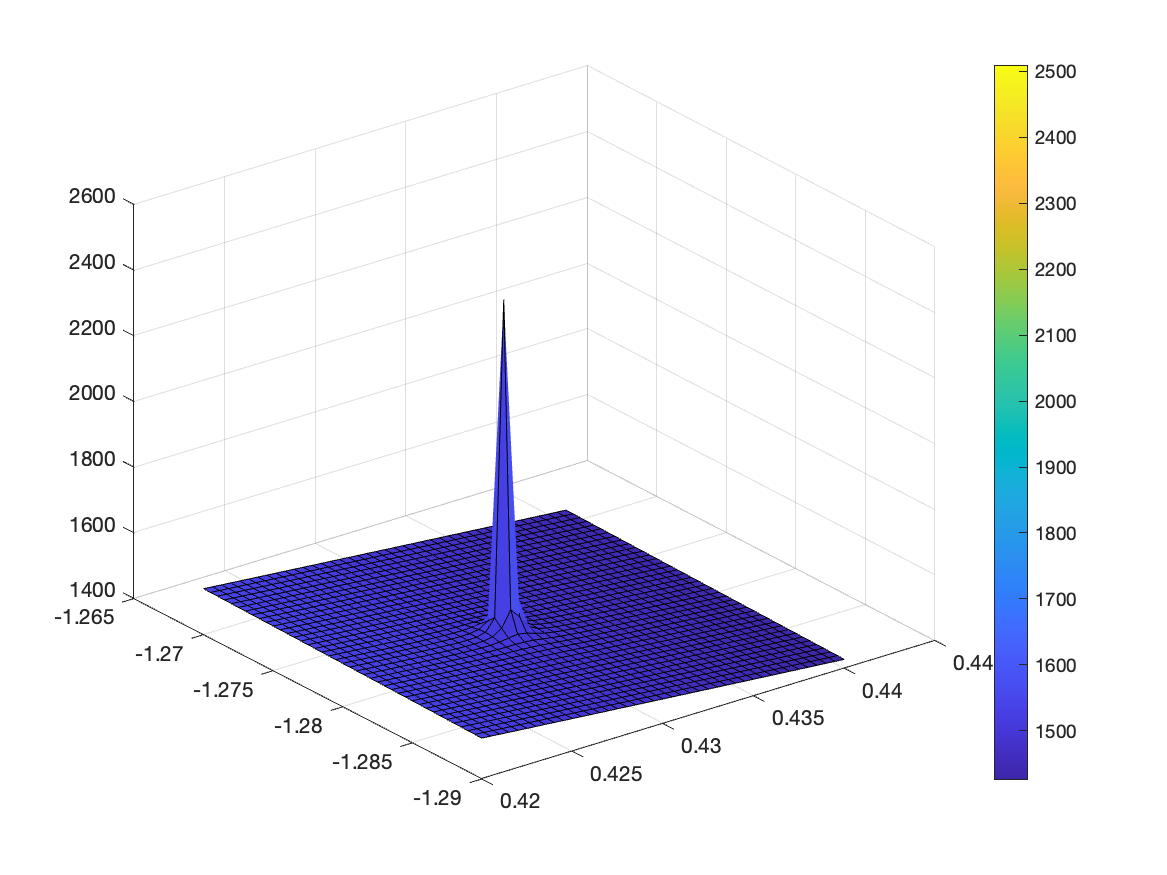}}&
\resizebox{0.5\textwidth}{!}{\includegraphics{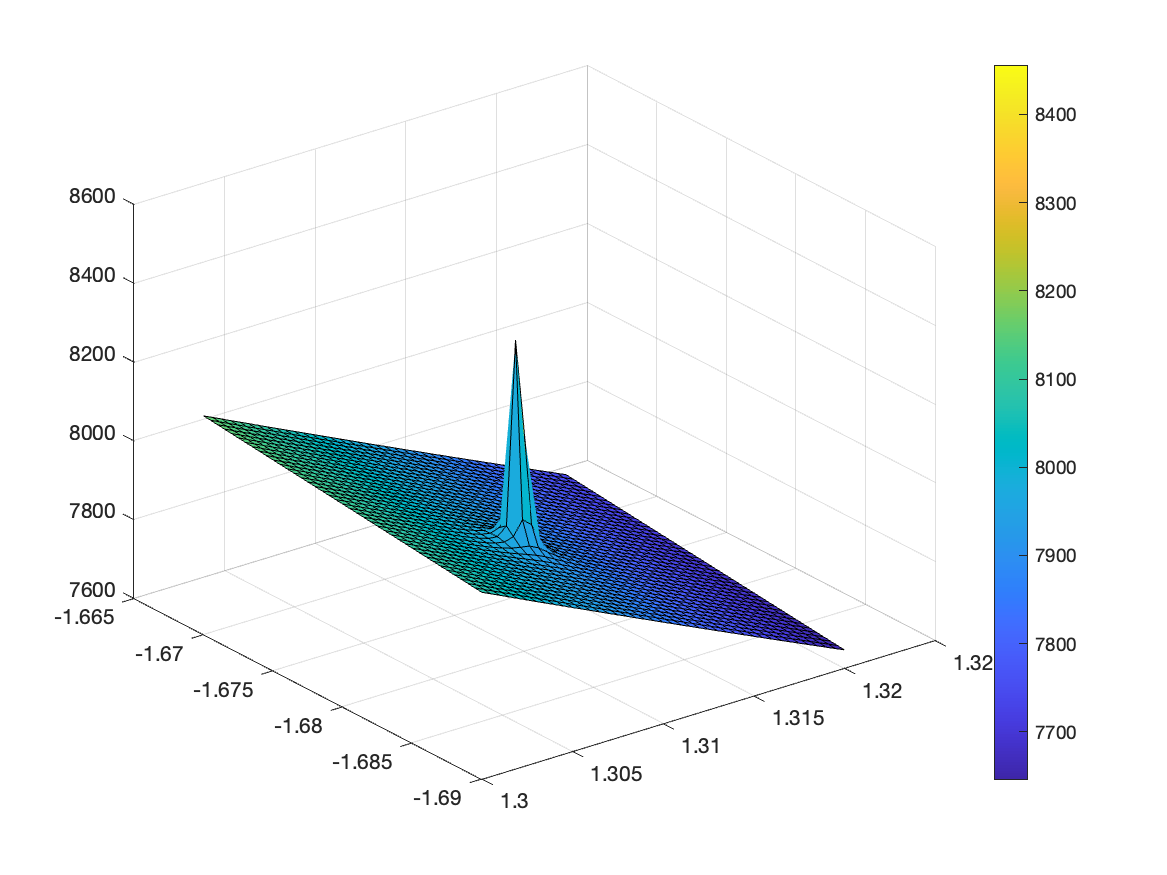}}
\end{tabular}
\end{center}
\caption{Plot of $|{\boldsymbol g}^k|$ for the sound-soft disk.  Left: $S=[0.42, 0.44]\times[-1.29,-1.27]$. Right: $S=[1.30, 1.32]\times[-1.29,-1.27]$.}
\label{SScircle}
\end{figure}

\noindent {\bf Example 2.}
Let $D$ be the ellipse whose boundary is given by $\frac{x^2}{1.3^2}+\frac{y^2}{0.7^2}=1$. There are two poles 
$0.4586 - 1.2774i$ and $0.4315 - 1.3062i$ close to $0.44-1.29i$.
We take $S=[0.40, 0.48]\times[-1.33,-1.26]$ and choose $\partial \Omega$ as a circle with radius $0.5$ inside $D$. 
 Let $z=(1.4, 1.0)$ outside $D$. We choose $161 \times 161$ points uniformly for $S$ and compute $|{\boldsymbol g}^k|$.
 The plot of $|{\boldsymbol g}^k|$'s against the sampling points are shown in the left picture of Fig.~\ref{SSellipse}. There are two spikes which locate at
 $0.4215-1.2975i$ and $0.4510-1.2710i$ and are close to the exact poles.
 
 There are also two scattering poles at $1.3378 - 1.6671i$ and $1.3268 - 1.6739i$. Let $S=[1.28, 1.36]\times[-1.71,-1.63]$. We choose $161 \times 161$ $k$'s uniformly for $S$ and check $|{\boldsymbol g}^k|$.
 The plot of $|{\boldsymbol g}^k|$'s against the sampling points are shown in the right picture of Fig.~\ref{SSellipse}. Two spikes locate at
 $1.3345-1.6795i$ and $1.343-1.6710i$, again close to the exact poles.

\begin{figure}[ht]
\begin{center}
\begin{tabular}{cc}
\resizebox{0.5\textwidth}{!}{\includegraphics{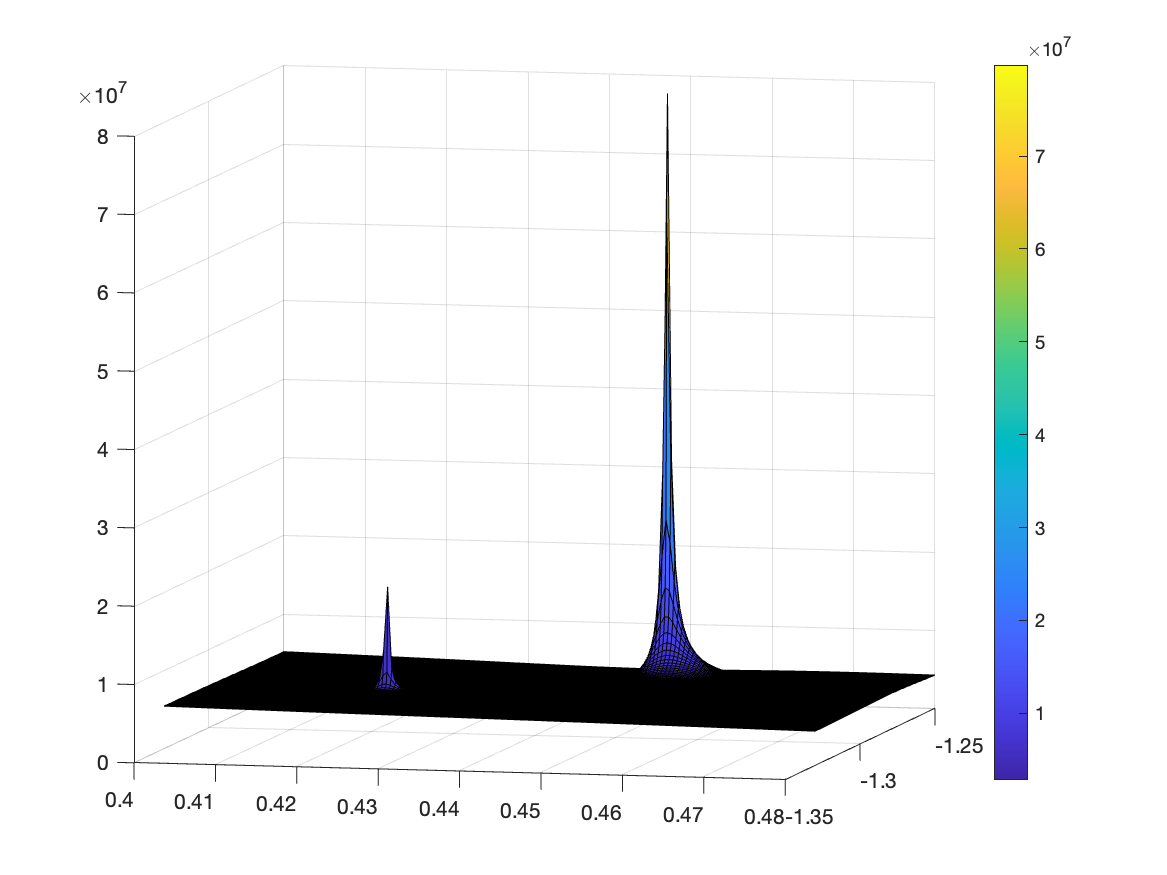}}&
\resizebox{0.5\textwidth}{!}{\includegraphics{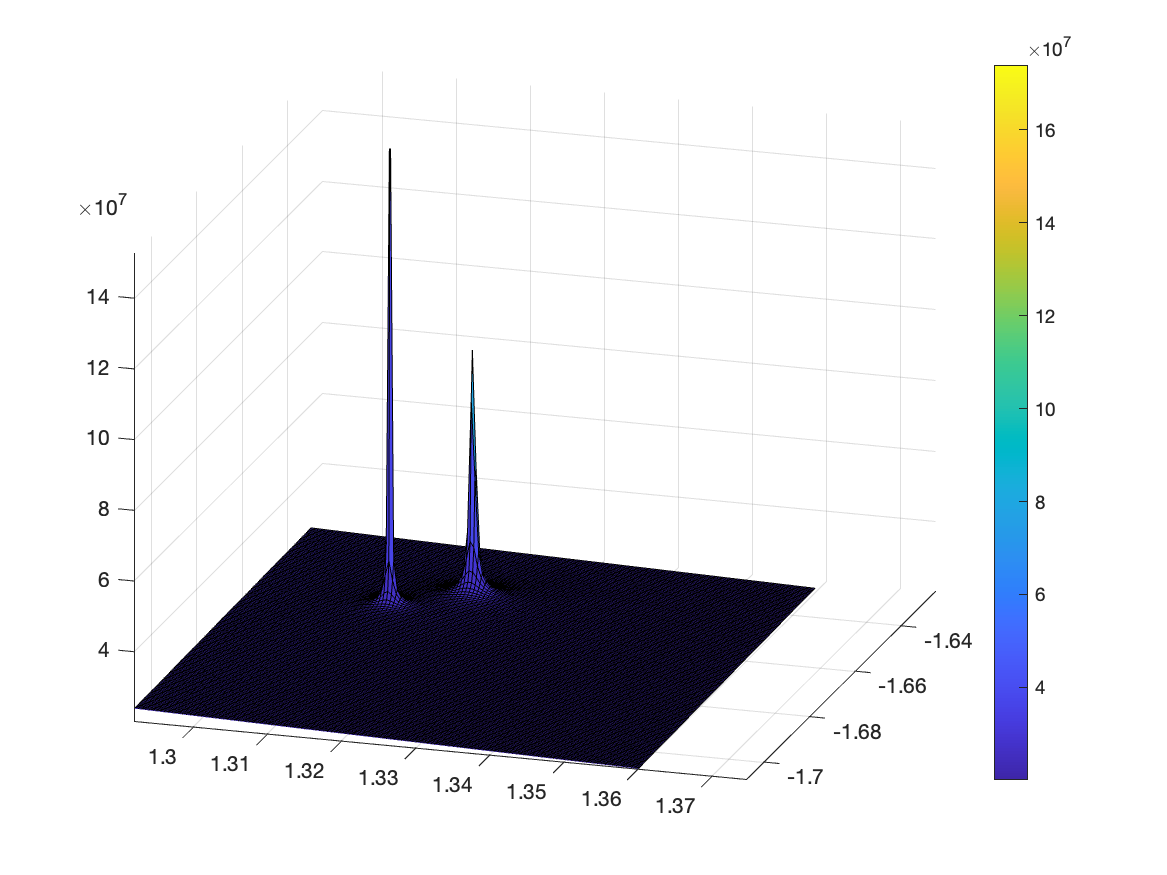}}
\end{tabular}
\end{center}
\caption{Plot of $|{\boldsymbol g}^k|$ for the sound-soft ellipse. Left: $S=[0.40, 0.48]\times[-1.33,-1.26]$. Right: $S=[1.28, 1.36]\times[-1.71,-1.63]$.}
\label{SSellipse}
\end{figure}

\subsection{Sound-hard Obstacles}

\noindent {\bf Example 3.} Let $D$ be the unit disk. Two exact scattering poles are given by
$0.5012 - 6.4355i$ and $1.4344 - 0.8345i$. Let $\partial \Omega$ be a circle with radius $0.7$ inside $D$. Again, we choose $40$ points $y_j, j=1, \ldots, 40,$ uniformly on $\partial \Omega$. The same set of points is also used for $x_i$'s. Let $z=(0,1.5)$, outside $D$. Set $S=[0.49, 0.51]\times[-0.65,-0.63]$. We choose $41 \times 41$ $k$'s uniformly in $S$ and compute $|{\boldsymbol g}^k|$'s. The plot of $|{\boldsymbol g}^k|$'s has a spike at $0.5018-0.6442i$, which is shown in the left picture of Fig.~\ref{NCcircle}. Then we set $S=[1.42, 1.44] \times [-0.85, -0.83]$. The plot of $|{\boldsymbol g}^k|$'s is the right picture of Fig.~\ref{NCcircle}, which has a spike at $1.4360-0.8355i$.

\begin{figure}[ht]
\begin{center}
\begin{tabular}{cc}
\resizebox{0.5\textwidth}{!}{\includegraphics{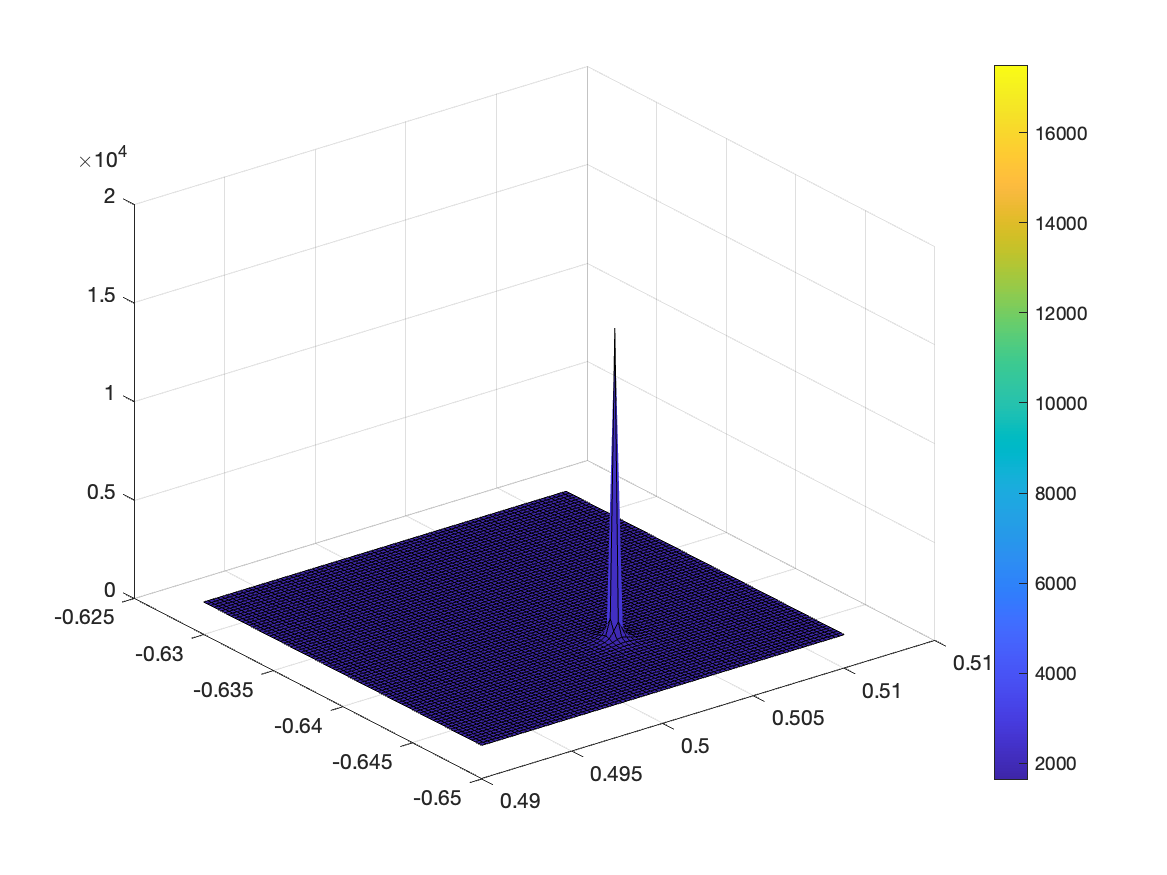}}&
\resizebox{0.5\textwidth}{!}{\includegraphics{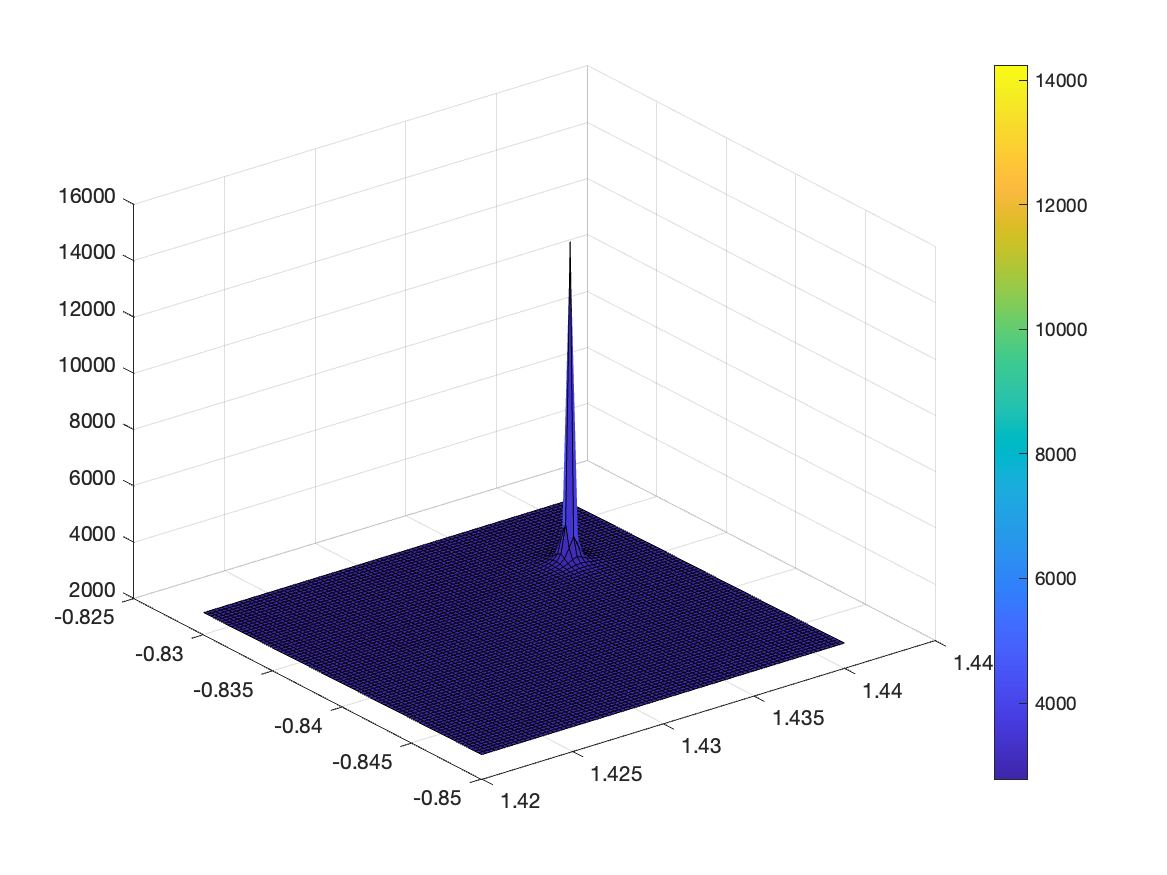}}
\end{tabular}
\end{center}
\caption{Plot of $|{\boldsymbol g}^k|$ for the sound-hard disk. Left: $S=[0.49, 0.51]\times[-0.65,-0.63]$. Right:  $S=[1.42, 1.44] \times [-0.85, -0.83]$.}
\label{NCcircle}
\end{figure}

\noindent{\bf Example 4.} Let $D$ be the ellipse whose boundary is given by $\frac{x^2}{1.3^2}+\frac{y^2}{0.7^2}=1$. Two exact scattering poles are $0.5388 - 0.5623i$ and $1.4436 - 0.7840i$. Let $\partial \Omega$ be a circle with radius $0.5$ inside $D$. We choose $40$ points $y_j, j=1, \ldots, 40,$ uniformly on $\partial \Omega$. The same set of points is also used for $x_i$'s. Let $z=(0,1.5)$, outside $D$, and $S=[0.53, 0.55]\times[-0.57,-0.55]$. We choose $41 \times 41$ $k$'s uniformly in $S$ and compute $|{\boldsymbol g}^k|$'s. The plot of $|{\boldsymbol g}^k|$'s has a spike at $0.5435-0.5548i$, which is shown in the left picture of Fig.~\ref{NCellipse}. Then we set $S=[1.44, 1.46] \times [-0.80, -0.78]$. The plot of $|{\boldsymbol g}^k|$'s is the right picture of Fig.~\ref{NCellipse}, which has a spike at $1.4470-0.7860i$.
 
 \begin{figure}[ht]
\begin{center}
\begin{tabular}{cc}
\resizebox{0.5\textwidth}{!}{\includegraphics{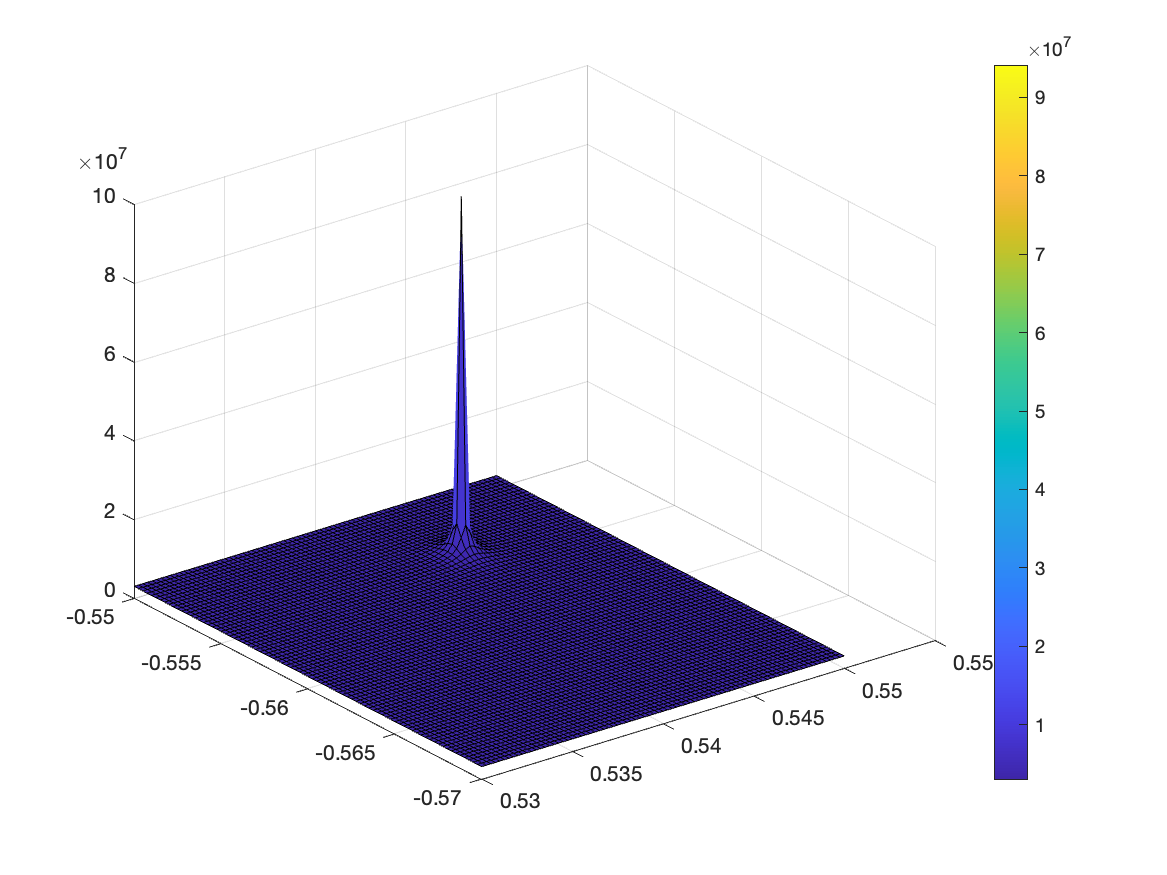}}&
\resizebox{0.5\textwidth}{!}{\includegraphics{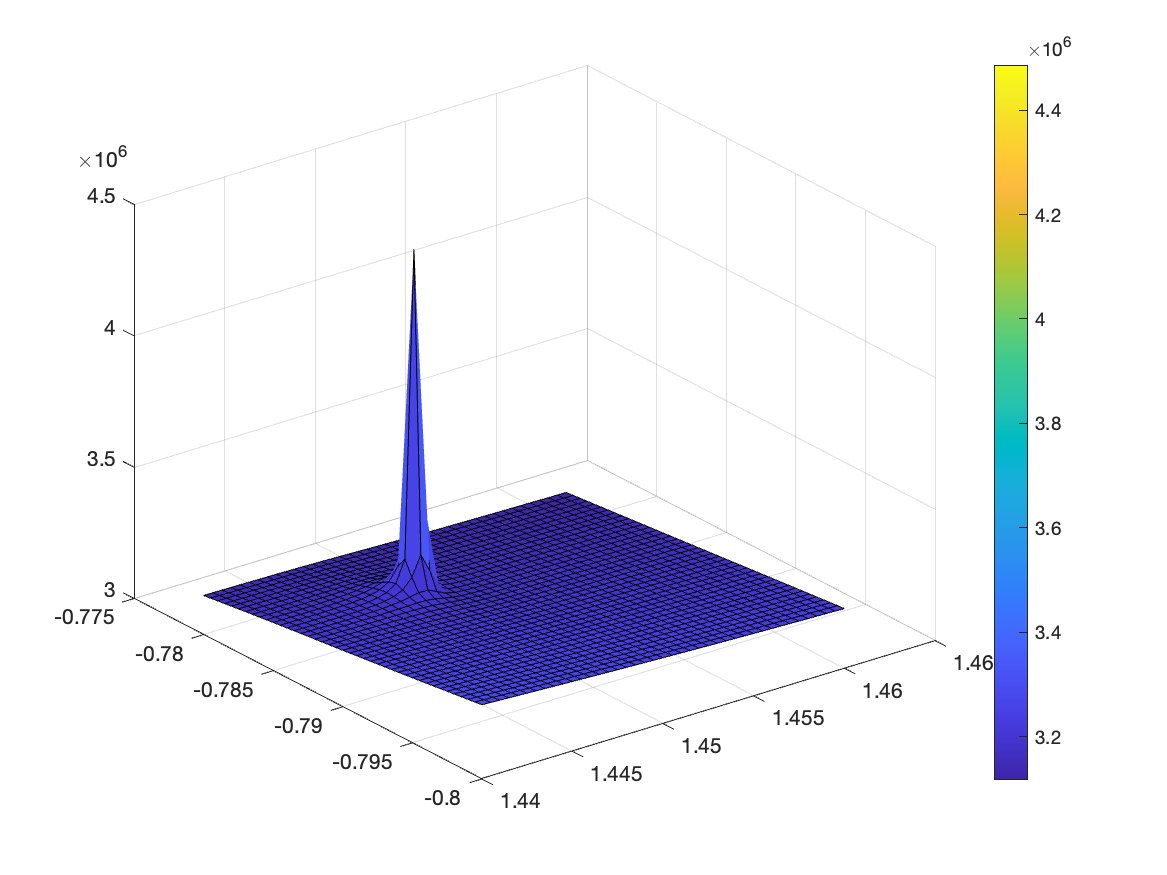}}
\end{tabular}
\end{center}
\caption{Sound-hard ellipse. Plot of $|{\boldsymbol g}^k|$ in different regions. Left: Region containing $k_1$. Right: Region containing $k_2$.}
\label{NCellipse}
\end{figure}

\subsection{Lipschitz Domains}
The above examples show that the characterization for scattering poles of smooth obstacles is rather accurate. However, similar accuracy is not achieved for domains with corners. Consider the sound-soft unit square $D=(-1/2, 1/2) \times (-1/2, 1/2)$. The exact scattering pole with smallest norm is $k_1 \approx 0.6747 - 2.0177i $.

Let $\partial \Omega$ be a circle with radius $0.3$ in $D$ and we carry out similar computation as the above examples. The spike of $|{\boldsymbol g}^k|$ appears at $0.7180-2.1440i$. To gain some understanding of this, we consider a series of domains with continuous boundaries approximating the unit square. In particular, we use quarters of the circles with radius $r = 0.2, 0.1, 0.04, 0.02, 0.002$ to replace the corners of the unit square (see Fig.~\ref{Squares}). 

The smallest scattering poles for the above domains are $0.7210 - 2.1537i$, $0.6920 - 2.0690i$, $0.6795 - 2.0324i$, $0.6764 - 2.0235i$, $0.6746 - 2.0181i$. These values are getting closer to a pole $0.6747 - 2.0177i$ of the unit square as $r$ becomes smaller. It shows that as the domains get closer to the unit square, the smallest scattering poles are also closer to the smallest pole of the unit square. The spike at $0.7180-2.1440i$ is close to a pole of the first domain in Fig.~\ref{Squares}.


\begin{figure}[ht]
\begin{center}
\begin{tabular}{ccc}
\resizebox{0.33\textwidth}{!}{\includegraphics{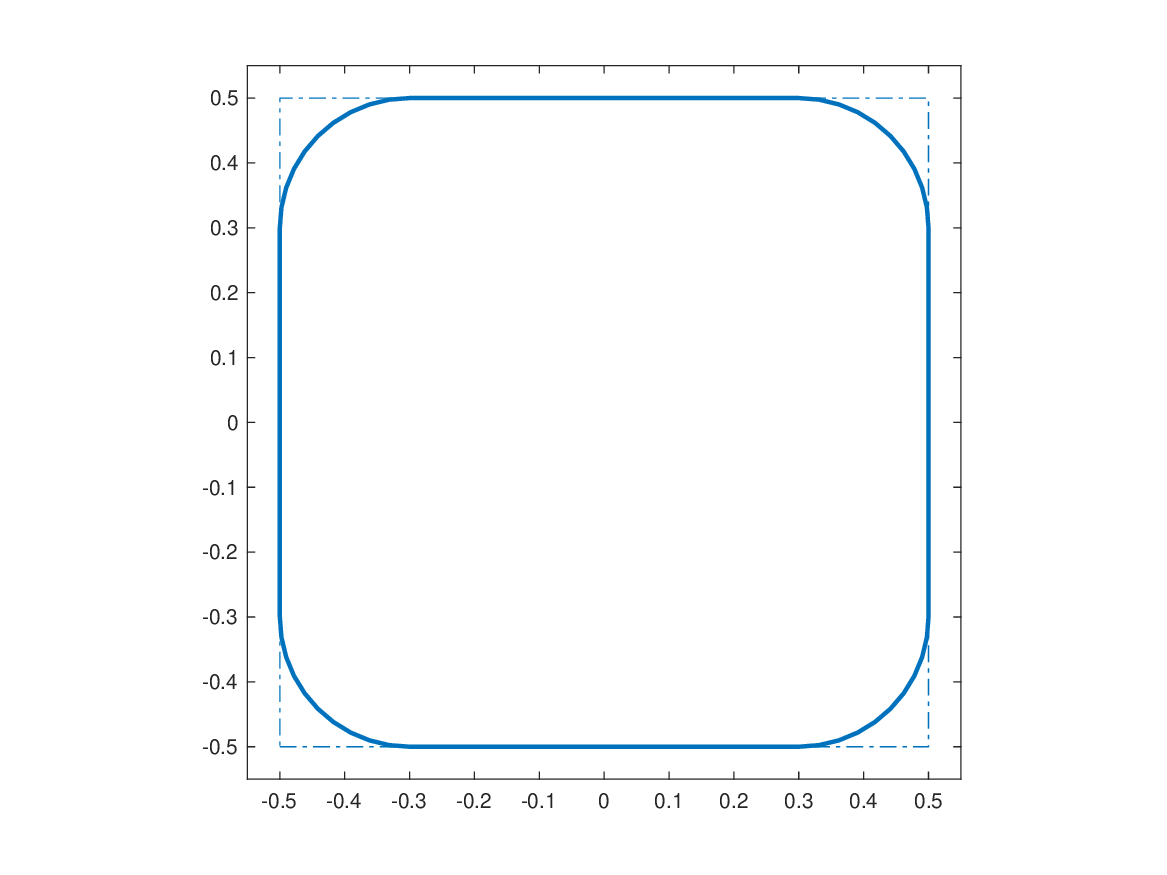}}&
\resizebox{0.33\textwidth}{!}{\includegraphics{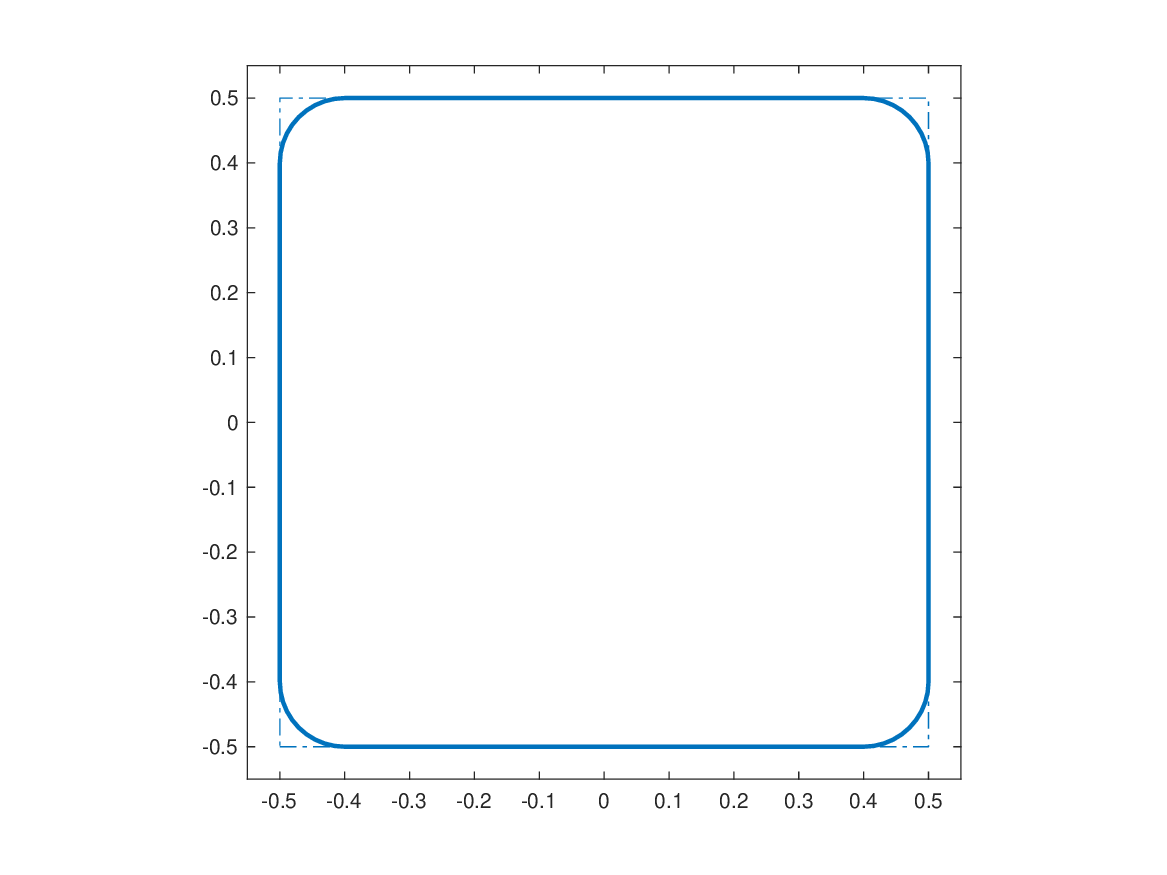}}&
\resizebox{0.33\textwidth}{!}{\includegraphics{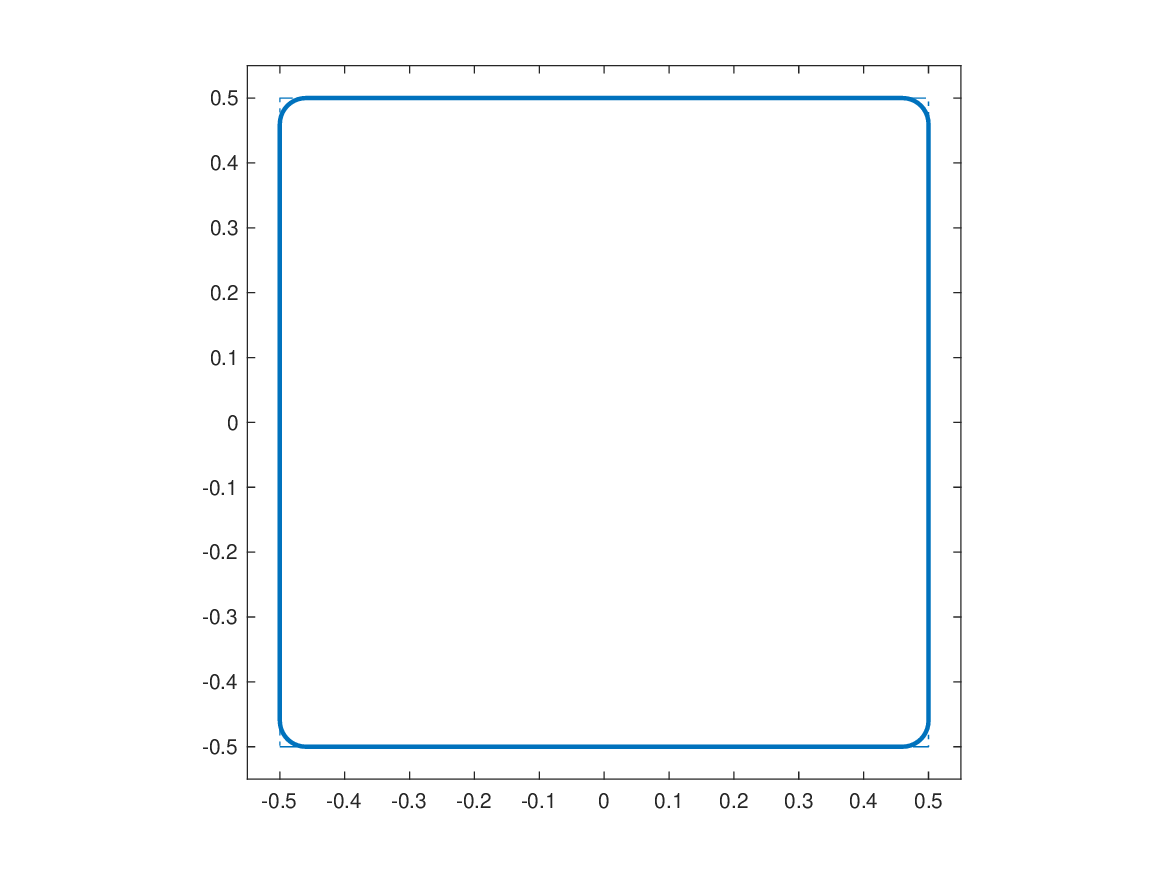}}\\
\resizebox{0.33\textwidth}{!}{\includegraphics{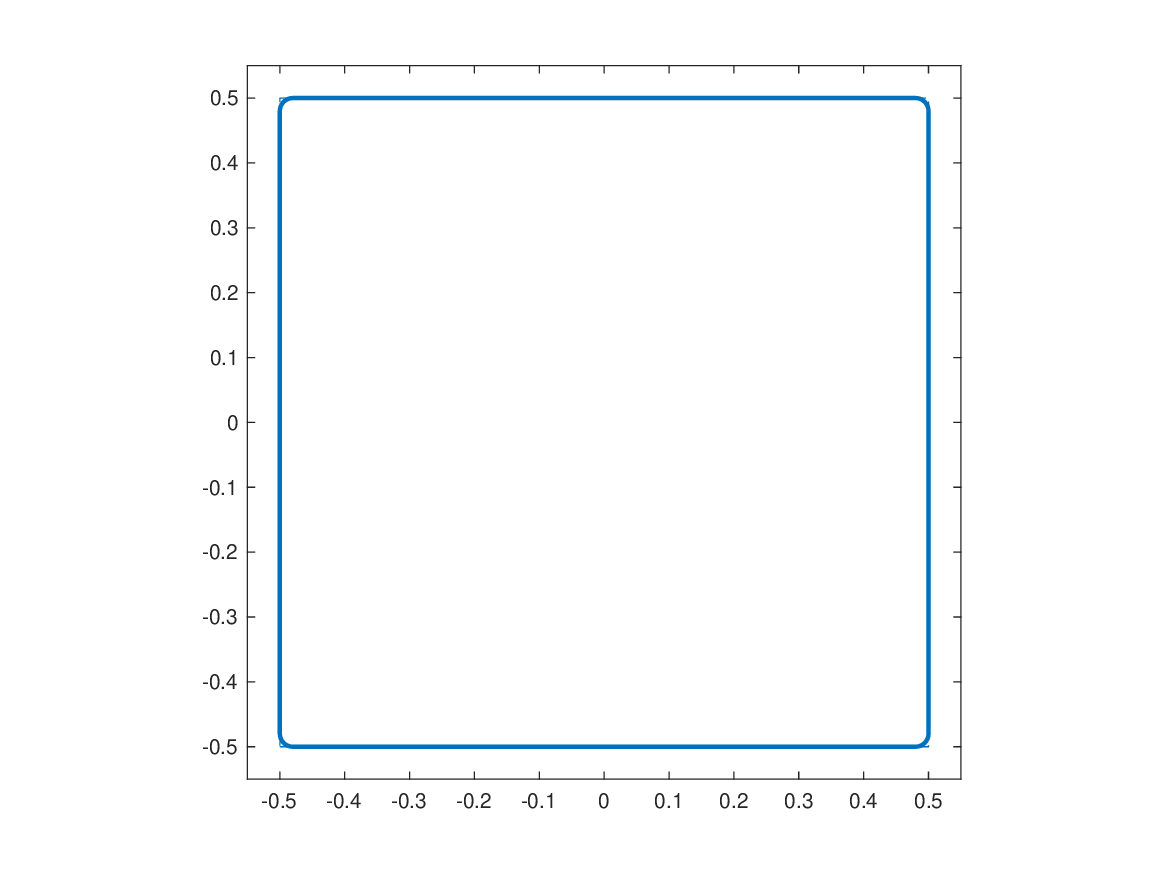}}&
\resizebox{0.33\textwidth}{!}{\includegraphics{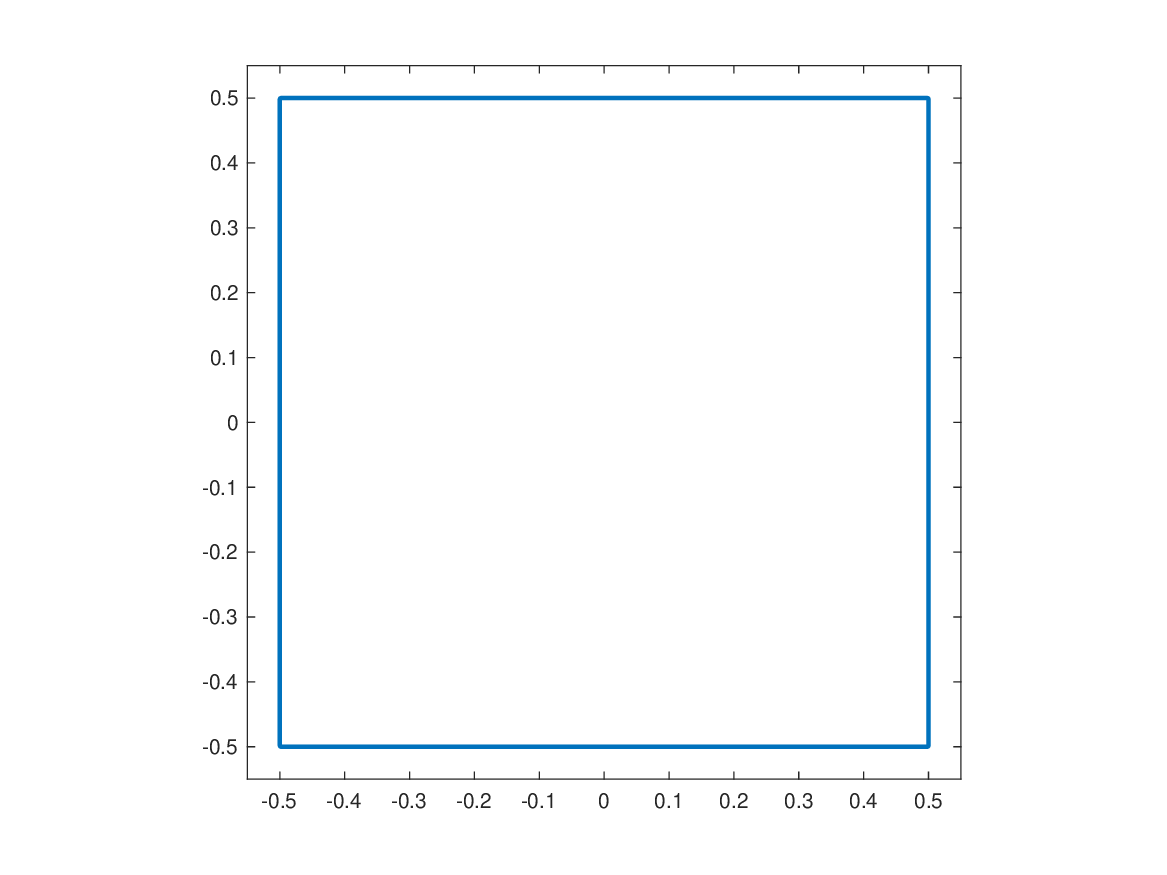}}
\end{tabular}
\end{center}
\caption{A series of domains with continuous boundaries approximating the unit square. The corners of the unit square are replaced with the corresponding quarters of a circle with radius $r$. Top left: $r=0.2$. Top middle: $r=0.1$. Top right: $r=0.04$. Bottom left: $r=0.02$. Bottom middle: $r=0.002$.}
\label{Squares}
\end{figure}

One question is that whether the information of a corner can be recovered using the interior scattering problem when the (complex) wavenumber is rather small in norm. Nonetheless, it remains interesting to investigate whether scattering poles of domains with corners can be characterized accurately using interior scattering problems.

   


\section{Conclusions and Future Work}
This paper investigates the scattering problems of sound-soft and sound-hard obstacles for complex wavenumbers. We establish two generalized Rellich's lemmas and use them to prove some uniqueness theorems for the inverse scattering problems. Additionally, we explore the inside-out duality, demonstrating that the scattering poles can be characterized by the injectivity of a near-field operator, which is defined using the solutions of interior scattering problems.

Numerical experiments suggest that scattering poles can be characterized with reasonable accuracy for smooth obstacles. However, the accuracy decreases for non-smooth obstacles. We believe that the complexity of the interior scattering problem is the primary source of this inaccuracy, as the scattered field conveys limited information about the corners. This limitation might affect the effectiveness of using the inside-out duality to estimate scattering poles. Nevertheless, it is worth exploring the possibility of improving accuracy by modifying the current algorithm or developing new algorithms.

The properties of scattering problems for complex wavenumbers are important research topics. We plan to extend our study to include inhomogeneous media and electromagnetic or elastic waves. Scattering resonances in periodic structures warrant investigation as well.

\section*{Acknowledgement}
The research of X. Liu is supported by the National Key R\&D Program of China under grant 2023YFA1009300, the NNSF of China under grant 12371430 and Beijing Natural Science Foundation Z200003. The research of J. Sun is funded in part by Simons Foundation Grant 711922 and NSF Grant 2109949. The research of L. Zhang is supported by the NNSF (12271482), Zhejiang Provincial NSFC (LZ23A010006; LY23A010004).

\end{document}